\crefname{hypothesis}{Hypothesis}{Hypotheses}
\newtheorem{Prop}{Proposition}[section]
\newtheorem{Lem}{lemma}[section]
\newtheorem{Thm}{Theorem}[section]
\newtheorem{Rem}{Remark}[section]
\title{A preconditioned Krylov subspace method for linear inverse problems with general-form Tikhonov regularization }
\author{Haibo Li\thanks{Institute of Computing Technology, Chinese Academy of Sciences, 100190 Beijing, China. 
  (\email{haibolee1729@gmail.com}).}
}
\newcommand\transG{*_{G}}
\newcommand\argmin{\mathop{\arg\min}}
\algrenewcommand\algorithmicrequire{\textbf{Input:}}
\algrenewcommand\algorithmicensure{\textbf{Output:}}
\begin{document}
%\pagewiselinenumbers
\maketitle

% REQUIRED
\begin{abstract}
Tikhonov regularization is a widely used technique in solving inverse problems that can enforce prior properties on the desired solution. In this paper, we propose a Krylov subspace based iterative method for solving linear inverse problems with general-form Tikhonov regularization term $x^TMx$, where $M$ is a positive semi-definite matrix. An iterative process called the preconditioned Golub-Kahan bidiagonalization (pGKB) is designed, which implicitly utilizes a proper preconditioner to generate a series of solution subspaces with desirable properties encoded by the regularizer $x^TMx$. Based on the pGKB process, we propose an iterative regularization algorithm via projecting the original problem onto small dimensional solution subspaces. We analyze regularization effect of this algorithm, including the incorporation of prior properties of the desired solution into the solution subspace and the semi-convergence behavior of regularized solution. To overcome instabilities caused by semi-convergence, we further propose two pGKB based hybrid regularization algorithms. All the proposed algorithms are tested on both small-scale and large-scale linear inverse problems. Numerical results demonstrate that these iterative algorithms exhibit excellent performance, outperforming other state-of-the-art algorithms in some cases.
\end{abstract}

% REQUIRED
\begin{keywords}
inverse problems, ill-posed, general-form Tikhonov regularization, preconditioned Golub-Kahan bidiagonalization, subspace projection regularization, hybrid regularization
\end{keywords}

% REQUIRED
\begin{MSCcodes}
65F22, 65J20, 65J22
\end{MSCcodes}

%%-----------------------------------------------------------------------------
\section{Introduction}
% Describe inverse problems with applications 
Inverse problems arise in various fields of science and engineering, where the aim is to recover unknown parameters or functions from observed data. Such problems are often encountered in many applications, including image reconstruction, medical imaging, geophysics, data assimilation and so on \cite{Kaip2006,Hansen2006,Buzug2008,Law2015,Richter2016}. Formally, a linear inverse problem after discretization leads to the following linear system:
\begin{equation}\label{inverse1}
	A x_{\text{true}} + e = b
\end{equation}
where $A\in\mathbb{R}^{m\times n}$ is the (discretized) forward operator that maps the unknown quantity to the observed data, $e$ is the noise in the observed data, and $x_{\text{true}}$ is the underlying quantity we wish to reconstruct. One key issue with inverse problems is that they are usually ill-posed. For \cref{inverse1} it means that $A$ is extremely ill-conditioned such that small perturbation in $b$ leads to large changes in the solution, or $A$ is under-determined such that there may be multiple solutions that fit the data equally well \cite{Hansen1998,Engl2000}. These difficulties stem from the fact that the inverse of the forward operator is usually discontinuous or fails to preserve certain properties of the desired solution, such as smoothness or sparsity \cite{Engl2000}. 

% Details about general-form regularization, Bayesian inverse problems
To overcome these challenges, regularization techniques are commonly employed, which uses prior knowledge about the underlying solution to constrain the set of possible solutions and improve their stability and uniqueness properties. The idea is to introduce a penalty term into the objective function to promote solutions that are smooth, sparse, or have other desirable properties \cite{Engl2000,Hansen2010}. It seeks to solve the following regularized inverse problem:
\begin{equation}\label{Tikhon1}
x_{\lambda} = \argmin_{x \in \mathbb{R}^{n}}\{\|Ax-b\|_{2}^{2} + \lambda R(x)\},
\end{equation}
where $\lambda>0$ is a regularization parameter that controls the trade-off between data fit and regularization, and $R(x)$ is a regularizer that encodes our prior knowledge about the solution. Tikhonov regularization is a popular choice for regularizing ill-posed inverse problems, which corresponds to choosing the regularizer $R(x)=\|Lx\|_{2}^{2}$ for some linear operator $L\in\mathbb{R}^{p\times n}$ that maps $x$ to a suitable space \cite{Tikhonov1977}. Regularization based on Bayesian inference is another commonly used regularization method \cite{Kaip2006,Stuart2010}. Suppose $e\sim\mathcal{N}(0,\mu^{-1}I)$ is a Gaussian random vector. In the paper we denote by $I$ the identity matrix with orders clear from the context. Then the likelihood of $b|x$ satisfies $\pi(b|x) \propto \exp\left(-\frac{\mu}{2}(Ax-b)^T(Ax-b)\right)$. If we choose a Gaussian prior $\pi(x) \propto \exp\left(-\frac{\sigma}{2}x^{T}Mx\right)$ to model the distribution of $x$, where $M$ is positive semidefinite, by the Bayes' formula we have the posterior likelihood:
\begin{align*}
	\pi(x|b) 
	\propto \pi(x)\pi(b|x)
	\propto \exp\left(-\frac{\mu}{2}(Ax-b)^T(Ax-b)-\frac{\sigma}{2}x^{T}Mx\right).
\end{align*}
By neglecting the scaling factor $\mu/2$, the maximum a posterior (MAP) estimate of $x$ is the solution to
\begin{equation}\label{Bayes1}
	\min_{x \in \mathbb{R}^{n}}\{\|Ax-b\|_{2}^{2} + \frac{\sigma}{\mu}x^TMx\}.
\end{equation}
Comparing \cref{Bayes1} with \cref{Tikhon1}, we know that $\sigma/\mu$ plays the role of the regularization parameter $\lambda$, and if $M=L^{T}L$ is the square root decomposition of $M$ (note in some literature the square root of $M$ requires $L^T=L$), then $x^{T}Mx=\|Lx\|_{2}^{2}$ is just the Tikhonov regularization term which comes from the prior distribution of $x$ that encodes the structure we expect to enforce on $x$.

% subspace method, preconditioned method; their goodness and badness. 
In this paper, we consider iterative methods for solving the two equivalent regularized inverse problems
\begin{equation}\label{regular1}
	\min_{x \in \mathbb{R}^{n}}\{\|Ax-b\|_{2}^{2} + \lambda x^TMx\} \ \  \mathrm{or} \ \ 
	\min_{x \in \mathbb{R}^{n}}\{\|Ax-b\|_{2}^{2} + \lambda \|Lx\|_{2}^{2}\}
\end{equation}
within the subspace projection regularization framework
\begin{equation}\label{subspace_regu1}
	\min_{x\in\mathcal{X}_k}x^TMx, \ \ \mathcal{X}_k = \{x: \min_{x\in\mathcal{S}_k}\|Ax-b\|_{2}\} 
\end{equation}
to avoid choosing in advance regularization parameters. A series of solution subspaces $\mathcal{S}_k\subseteq\mathbb{R}^{n}$ of dimension $k=1,2,\dots$ should be constructed to incorporate prior properties of the solution encoded by the regularizer $x^TMx$. For standard-form regularization with $M = I$, the most popular iterative regularization method is LSQR \cite{Paige1982} with an early stopping rule, which projects \cref{inverse1} onto a sequence of lower dimensional Krylov subspaces to approximate $x_{\text{true}}$ \cite{Oleary1981,Bjorck1988}. The iteration should stop early to overcome semi-convergence by some criteria such as L-curve, discrepancy principle or GCV \cite{Morozov1966,Golub1979,Hansen1992}. For general-form regularization that $M\neq I$, if $L$ is already available or the decomposition $M=L^{T}L$ can be obtained without too much computation, there are many methods deal with the regularizer $\|Lx\|_{2}^{2}$ instead of $x^TMx$. For an invertible $L$, we can use $L$ as a preconditioner by the substitution $y = Lx$; see \cite{Calvetti2005priorconditioners,Calvetti2007,Calvetti2018} for preconditioned methods for Bayesian inverse problems. Otherwise, \cref{regular1} can be transformed to the standard-form by using the $A$-weighted pseudoinverse $L_{A}^{\dag}$; see \cite{Elden1982} or \cite[\S 2.3]{Hansen1998} for details. However, such transformations are often computationally unfeasible for large-scale matrices. For large-scale $A$ and $L$, there are some other iterative regularization methods, such as the modified truncated SVD method \cite{Hansen1992modified,Bai2021novel,Huang2022tikhonov}, joint bidiagonalization method \cite{Kilmer2007Projection},
%,JiaYang2020,JiaLi2023}, 
methods based on randomized GSVD of $\{A,L\}$ \cite{Xiang2015,Wei2016,Vatankhah2018total}, methods based on generalized Krylov subspace \cite{Lampe2012large,Reichel2012tikhonov,Huang2016projected} and so on.

%% Discuss M=L'*L when L is singular, raise problems, and introduce our idea
However, all the above methods require $L$, which is not available in some scenarios of applications. For example, for the Met\'{e}rn class of covariance functions describing the prior of $x$, which are also called Met\'{e}rn kernels \cite{Genton2001classes,Roininen2011correlation}, the corresponding $M$ can be large-scale and dense and thus computing $L$ is extremely expensive. Another class of frequently encountered examples is arisen in the lagged diffusivity fixed point (LDFP) iteration method for nonlinear regularizers \cite{Vogel1996iterative,Chan1999convergence}, such as total variation \cite{Rudin1992nonlinear} or Perona-Malik \cite{Perona1990scale} regularizer used in image reconstruction and electrical impedance tomography, where at each outer iteration of LDFP, a large-scale $M$ is constructed to linearize the nonlinear regularizer and a corresponding regularization problem \cref{regular1} needs to be solved. For these cases, we have to deal with $x^TMx$ instead of $\|Lx\|_{2}^{2}$. If $M$ is positive definite and $M^{-1}$ (i.e., covariance matrix of the prior) is already known, which is often the case for Met\'{e}rn kernels, the generalized Golub-Kahan bidiagonalization method \cite{Chung2017} is very efficient. For many cases that $M^{-1}$ is unknown, a preconditioned LSQR method called MLSQR has been proposed where a linear system $Mx=y$ needs to be iteratively solved at each iteration \cite{Arridge2014}, and it has been used in the inner iteration of LDFP for many applications; see e.g. \cite{Harhanen2015,Hannukainen2016edge,Bin2020irn}. 
To successfully applying MLSQR, a big challenge is that $M$ is often non-invertible or nearly singular. Even if $M$ is invertible, it is often the case that $M$ has a very large condition number, which results in that too many iterations are needed for solving $Mx=y$, thus significantly reduces the efficiency of this algorithm. In some work such as \cite{Hannukainen2016,Candiani2021}, the researchers suggest to replace $M$ by $M_{\delta}=M+\delta I$ to make $M_{\delta}$ positive definite and well-conditioned. But the proper value of $\delta$ can only be set by numerical trials, and there may be an accuracy sacrifice of the solution since the target problem has been changed. 

% list three main contributions
In this paper, we propose a new Krylov subspace based regularization method to deal with the regularizer $x^TMx$ for positive semidefinite matrix $M$, and our method do not need replace $M$ by a positive definite $M_{\delta}$. To this end, we first design an  iterative process that generates a series of vectors spanning the solution subspaces, where a proper preconditioner is implicitly constructed and exploited. This preconditioner is proper in the sense that the generated solution subspaces can incorporate prior properties of the desired solution encoded by the regularizer $x^TMx$, resulting in an iterative regularized solution of high quality. The main contributions of this paper are listed as follows:
\begin{itemize}
	\item We design an iterative process similar to the Golub-Kahan bidiagonalization (GKB) to generates a series of solution subspaces. This process is proved to be mathematically equivalent to the standard GKB process of a preconditioned $A$ where a right preconditioner is implicitly used, thereby we name it as the preconditioned GKB (pGKB) process.
	\item  By giving explicit expression of the solution subspace using the GSVD of $\{A,L\}$, we show that the above solution subspace can incorporate prior properties of the desired solution encoded by the regularizer. Based on the pGKB process, we propose a subspace projection regularization algorithm via projecting the original problem onto the solution subspace at each iteration. 
	\item We analyze regularization effect of the above proposed algorithm. We prove that the iterative solution has a filtered GSVD expansion form, where some dominant GSVD components are captured and the others are filtered out. This reveals that the algorithm exhibits typical semi-convergence behavior where the iteration number $k$ plays the role of regularization parameter.
	\item To overcome instabilities arising from semi-convergence, two pGKB based hybrid regularization algorithms are proposed, where Tikhonov regularization is applied to the projected small-scale problem at each iteration. To efficiently determine regularization parameters for the projected small-scale problems, the weighted GCV (WGCV) method and ``secant update'' method based on the discrepancy principle are adopted, respectively.
\end{itemize}
% impact of our results and potential applications
All these proposed iterative algorithms are tested on both small-scale and large-scale linear inverse problems to show excellent effectiveness and performance.

% The outline of the paper
The paper is organized as follows. In \Cref{sec2}, we review basic properties of the general-form Tikhonov regularization using the GSVD of $\{A,L\}$. In \Cref{sec3}, we design the pGKB process and propose the pGKB based subspace projection regularization (pGKB\_SPR) algorithm. In \Cref{sec4}, we analyze regularization effect of pGKB\_SPR and reveal the semi-convergence behavior of it. To overcome instabilities caused by semi-convergence, in \Cref{sec5} we propose two pGKB based hybrid regularization algorithms. In \Cref{sec6}, we choose several small-scale and large-scale linear inverse problems to test the proposed algorithms. Finally, we give some concluding remarks in \Cref{sec7}.

Throughout the paper, we denote by $I$ and $\boldsymbol{0}$ the identity matrix and zero matrix/vector, respectively, with orders clear from the context, and denote by $\mathrm{span}\{\cdot\}$ the subspace spanned by a group of vectors or columns of a matrix.

%%-----------------------------------------------------------
\section{General-form Tikhonov regularization and GSVD}\label{sec2}
Although in this paper, the square root decomposition $M=L^TL$ is not needed, it is convenient to use $L$ to make some analysis. Since $M$ is positive semidefinite, we write $L$ in the compact form, i.e., $L\in\mathbb{R}^{p\times n}$ with $p\leq n$ and $\mathrm{rank}(L)=p$. Taking the gradient of $x$ in \cref{regular1} leads to
\begin{equation}\label{normal}
	(A^TA+\lambda M)x = A^Tb .
\end{equation}
Denote by $\mathcal{N}(\cdot)$ the null space of a matrix. In order to ensure that there exist a unique regularized solution for any $\lambda>0$, the sufficient and necessary condition is
\begin{equation}\label{rank1}
	\mathrm{rank}(A^TA+\lambda M)=n \ \Longleftrightarrow \ \mathcal{N}(A)\cap \mathcal{N}(L)=\{\boldsymbol{0}\},
\end{equation}
and the solution is
\begin{equation}\label{Tihk_solut}
	x_{\lambda} = (A^TA+\lambda M)^{-1}A^Tb .
\end{equation}
This solution can be expressed in a more convenient form using the GSVD of $\{A,L\}$ \cite{Van1976}, which is given by 
\begin{equation}\label{gsvd}
	A = U_AD_AZ^{-1}, \ \ 
	L = U_LD_LZ^{-1},
\end{equation}
where $U_A\in\mathbb{R}^{m\times m}$ and $U_L\in\mathbb{R}^{p\times p}$ are orthogonal, $Z\in\mathbb{R}^{n\times n}$ is invertible, and
\begin{equation*}
	D_A = \bordermatrix*[()]{%
	I_{A}  &  &  & r \cr
	&  \Sigma_{A}  &  & q \cr
	&  & O_{A}  & m-r-q \cr
	r & q & n-r-q
} , \ 
D_L =
\bordermatrix*[()]{%
	O_L  &  &  & p+r-n \cr
	&  \Sigma_{L}  &  & q \cr
	&  & I_{L}  & n-r-q \cr
	r & q & n-r-q
} ,
\end{equation*}
with identity matrices $I_A$ and $I_{L}$, zero matrices $O_{A}$ and $O_{L}$, and diagonal matrices $\Sigma_{A}=\mathrm{diag}(\sigma_{r+1},\dots,\sigma_{r+q})$ and $\Sigma_{L}=\mathrm{diag}(\rho_{r+1},\dots,\rho_{r+q})$. The identity $\Sigma_{A}^{T}\Sigma_{A}+\Sigma_{L}^{T}\Sigma_{L}=I$ holds. If we arrange $\sigma_i$ and $\rho_i$ in decreasing order of values such that
\begin{equation}
	1>\sigma_{r+1}\geq\sigma_{r+2}\geq\cdots\geq\sigma_{r+q}>0, \ \ \ 
	0<\rho_{r+1}\leq\rho_{r+2}\leq\cdots\leq\rho_{r+q}<1 ,
\end{equation}
then $\sigma_{i}^{2}+\rho_{i}^{2}=1$, and $\gamma_i:=\sigma_i/\rho_i$ is called the $i$-th generalized singular value of $\{A,L\}$. We define $\gamma_{i}=\infty$ for $1\leq i\leq r$ and $\gamma_i=0$ for $r+q+1\leq i\leq n$. 

Denote the columns of $U_A$ and $Z$ by $\{u_{A,i}\}_{i=1}^{m}$ and $\{z_i\}_{i=1}^{n}$, respectively. The discrete Picard condition (DPC) plays a central role in the regularization of discrete ill-posed problems, which says that the Fourier coefficients $|u_{A,i}^T b_{\text{true}}|$ on average decay to zero faster than the corresponding $\gamma_{i}$, where $b_{\text{true}}=Ax_{\text{true}}$. Any regularization is based on an underlying requirement that the DPC is satisfied, only under which can one compute a regularized solution with some accuracy \cite[\S 4.5]{Hansen1998}. 
Using the decomposition \cref{gsvd}, the Tikhonov solution $x_{\lambda}$ can be written as 
\begin{equation}\label{Tihk_gsvd}
	x_{\lambda} = \sum_{i=1}^{r}(u_{A,i}^{T}b)z_i + \sum_{i=r+1}^{r+q}\frac{\gamma_{i}^{2}}{\gamma_{i}^{2}+\lambda}\frac{u_{A,i}^{T}b}{\sigma_i}z_i.
\end{equation}
The factors $f_{i}:=\gamma_{i}^{2}/(\gamma_{i}^{2}+\lambda)$ can be viewed as filters applied to noisy coefficients $u_{A,i}^{T}b$. A proper value of $\lambda$ should satisfy that $f_{i}\approx 1$ for small $i$ and $f_{i}\approx 0$ for large $i$, and thereby dampen noisy components appeared in the regularized solution. Although there are several methods for choosing regularization parameter, one big challenge is that in order to find a suitable $\lambda$, many different values of $\lambda$ must be tried to solve \eqref{regular1} in advance or the GSVD of $\{A,L\}$ should be computed, which is computationally expensive for large-scale problems. 

For large-scale problems, an alternative is the subspace projection regularization method \cite[\S 3.3]{Engl2000}, which seeks to compute a series of $x_k$ as the solution to
\begin{equation}\label{subspace_regu}
	\min_{x\in\mathcal{X}_k}x^TMx, \ \ \mathcal{X}_k = \{x: \min_{x\in\mathcal{S}_k}\|Ax-b\|_{2}\} 
\end{equation}
% give a proof that there exist unique solution of this projected problem! Write it as a remark.
where $\mathcal{S}_k$ is the subspace of $\mathbb{R}^{n}$ of dimension $k=1,2,\dots$, and the iteration proceeds until an early stopping criterion is satisfied to overcome under-regularizing caused by semi-convergence. 

\begin{Rem}
	Under the condition \cref{rank1}, the subspace projected problem \cref{subspace_regu} has a unique solution for $k=1,\dots,n$. Let $S_k$ be an $n\times k$ column orthonormal matrix whose columns span $\mathcal{S}_k$. By writing any $x\in\mathcal{S}_k$ as $x=S_ky$, we get the solution to \cref{subspace_regu} as $x_k=S_ky_k$, where $y_k$ is the solution to $\min_{x\in\mathcal{Y}_k}\|LS_ky\|_2, \ \mathcal{Y}_k = \{y: \min_{y\in\mathbb{R}^{k}}\|AS_ky-b\|_{2}\}$. By \cite[Theorem 2.1]{Elden1982}, there exist a unique solution $y_k$ if and only if $\mathcal{N}(AS_k)\cap\mathcal{N}(LS_k)=\{\boldsymbol{0}\}$, which is equivalent to $\mathrm{rank}(\begin{pmatrix}
		A \\ L
	\end{pmatrix}S_k)=n$. Since $S_k$ has full column rank, it means that $\mathrm{rank}(\begin{pmatrix}
		A \\ L
	\end{pmatrix})=n \Leftrightarrow \mathcal{N}(A)\cap\mathcal{N}(L)=\{\boldsymbol{0}\}$. Thus \cref{rank1} is the sufficient and necessary conditioned for that \cref{subspace_regu} has a unique solution.
\end{Rem}

We call $\mathcal{S}_k$ the \textit{solution subspace}, which should be constructed elaborately such that the prior information about the desired solution is incorporated in $\mathcal{S}_k$. For the general-form regularizer, from the expression \cref{Tihk_gsvd} we know that the most ideal choice is $\mathcal{S}_k=\mathrm{span}\{Z_k\}$, where $Z_k=(z_1,\dots, z_k)$. This leads to the truncated GSVD (TGSVD) solution 
\begin{equation}\label{2.17}
x_{k}^{\text{TGSVD}} = \sum_{i=1}^{k}\frac{u_{A,i}^Tb}{\sigma_i}z_i,
\end{equation}
where we let $\sigma_i=1$ for $1\leq i\leq r$ \cite[\S 3.2]{Hansen1998}. By truncating the GSVD at a proper $k$, the obtained solution can capture main information corresponding to dominant right generalized singular vectors while suppress noise corresponding to others. For large-scale problems that GSVD is computationally expensive, a legitimate subspace projection regularization method should construct a series of solution subspaces that can approximate well dominant right generalized singular vectors $z_i$ as the iterations progress. To this end, we first gain an insight into the relation between $z_i$ and the matrix pair $\{A,M\}$.

\begin{Prop}\label{prop2.1}
	Let $G=A^TA+\alpha M$ with $\alpha>0$. The generalized eigenvalues of $A^TAz=\xi Gz$ in decreasing order are
	\begin{equation}\label{gen_eigen}
		\underbrace{1,\dots,1}_{r},\underbrace{\gamma_{r+1}^2/(\gamma_{r+1}^2+\alpha),\dots,\gamma_{r+q}^2/(\gamma_{r+q}^2+\alpha)}_{q},\underbrace{0,\dots,0}_{n-r-q} ,
	\end{equation}
	and the corresponding generalized eigenvectors 
	are $\{z_i\}_{i=1}^{n}$.
\end{Prop}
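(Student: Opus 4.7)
The plan is to reduce the generalized eigenvalue problem $A^TAz = \xi G z$ to a diagonal one by plugging in the GSVD \cref{gsvd} and making the change of variables $w = Z^{-1}z$. Since $M = L^TL$, the GSVD gives $A^TA = Z^{-T}(D_A^TD_A)Z^{-1}$ and $M = Z^{-T}(D_L^TD_L)Z^{-1}$, so $G = Z^{-T}(D_A^TD_A + \alpha D_L^TD_L)Z^{-1}$. Multiplying the eigenproblem on the left by $Z^T$ and setting $w=Z^{-1}z$ turns it into
\begin{equation*}
(D_A^TD_A)\,w \;=\; \xi\,(D_A^TD_A + \alpha D_L^TD_L)\,w,
\end{equation*}
where both matrices are diagonal.

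Next I would read off the diagonal entries explicitly from the block forms of $D_A$ and $D_L$. The matrix $D_A^TD_A$ is $n\times n$ diagonal with entries $1$ (repeated $r$ times), then $\sigma_{r+1}^2,\dots,\sigma_{r+q}^2$, then $0$ (repeated $n-r-q$ times); the matrix $D_L^TD_L$ is diagonal with entries $0$ (repeated $r$ times), then $\rho_{r+1}^2,\dots,\rho_{r+q}^2$, then $1$ (repeated $n-r-q$ times). A diagonal pencil is diagonalized by the standard basis, so the eigenvectors are $w = e_i$, and the eigenvalues are the entrywise ratios.

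For $1\le i\le r$ the ratio is $1/(1+\alpha\cdot 0)=1$; for $r+1\le i\le r+q$ it is $\sigma_i^2/(\sigma_i^2+\alpha\rho_i^2)$, which I would rewrite as $\gamma_i^2/(\gamma_i^2+\alpha)$ by dividing numerator and denominator by $\rho_i^2$ and using $\gamma_i=\sigma_i/\rho_i$; for $r+q+1\le i\le n$ it is $0/\alpha=0$. Translating back via $z=Zw$ sends $e_i\mapsto z_i$, giving the claimed eigenvectors. The only thing to check is that the pencil is regular (no $0/0$ entries), and this is exactly \cref{rank1}: if some diagonal entry of $D_A^TD_A+\alpha D_L^TD_L$ vanished, one would need a column of $Z$ lying simultaneously in $\mathcal{N}(A)$ and $\mathcal{N}(L)$.

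There is no real obstacle here; the work is entirely bookkeeping with the block partition of $D_A,D_L$ and the identity $\sigma_i^2+\rho_i^2=1$. The mild care point is the ordering of the generalized eigenvalues: since $\gamma_i$ is decreasing in $i$, so is $\gamma_i^2/(\gamma_i^2+\alpha)$, which is what pins down the ordering in \cref{gen_eigen}.
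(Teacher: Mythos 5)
Your proposal is correct and follows essentially the same route as the paper's own proof: substitute the GSVD to diagonalize the pencil via $Z$, read off the eigenvalues as the diagonal ratios of $D_A^TD_A$ against $D_\alpha = D_A^TD_A+\alpha D_L^TD_L$, and simplify $\sigma_i^2/(\sigma_i^2+\alpha\rho_i^2)$ to $\gamma_i^2/(\gamma_i^2+\alpha)$. Your added remarks on regularity of the pencil and on the monotonicity of $\gamma_i^2/(\gamma_i^2+\alpha)$ are consistent with, and slightly more explicit than, what the paper states.
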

\begin{proof}
	Observing from \cref{gsvd} that
	\begin{equation*}
		A^TA = Z^{-T}D_{A}^TD_AZ^{-1}, \ \ \ M = L^TL = Z^{-T}D_{L}^TD_LZ^{-1},
	\end{equation*}
	where $D_{A}^TD_A$ and $D_{L}^TD_L$ are diagonal matrices of the following form:
	\begin{align*}
		& D_{A}^TD_A = \mathrm{diag}(\underbrace{1,\dots,1}_{r},\underbrace{\sigma_{r+1}^2,\dots,\sigma_{r+q}^2}_{q},\underbrace{0,\dots,0}_{n-r-q}), \\
		& D_{L}^TD_L = \mathrm{diag}(\underbrace{0,\dots,0}_{r},\underbrace{\rho_{r+1}^2,\dots,\rho_{r+q}^2}_{q},\underbrace{1,\dots,1}_{n-r-q}) ,
	\end{align*}
	we have $G=A^TA+\alpha M = Z^{-T}D_{\alpha}Z^{-1}$ with
	\begin{equation}\label{D_alpha}
		D_{\alpha} = \mathrm{diag}(\underbrace{1,\dots,1}_{r},\underbrace{\sigma_{r+1}^2+\alpha\rho_{r+1}^2,\dots,\sigma_{r+q}^2+\alpha\rho_{r+q}^2}_{q},\underbrace{\alpha,\dots,\alpha}_{n-r-q}) .
	\end{equation}
	Therefore, $\{z_i\}_{i=1}^{n}$ are generalized eigenvectors of $A^TAz=\xi Gz$ with generalized eigenvalues the diagonals of $D_{A}^TD_{A}D_{\alpha}^{-1}$, which are
	\begin{align*}
		& \ \ \ \underbrace{1,\dots,1}_{r},\underbrace{\sigma_{r+1}^2/(\sigma_{r+1}^2+\alpha\rho_{r+1}^2),\dots,\sigma_{r+q}^2/(\sigma_{r+q}^2+\alpha\rho_{r+q}^2)}_{q},\underbrace{0,\dots,0}_{n-r-q} \\
		&= \underbrace{1,\dots,1}_{r},\underbrace{\gamma_{r+1}^2/(\gamma_{r+1}^2+\alpha),\dots,\gamma_{r+q}^2/(\gamma_{r+q}^2+\alpha)}_{q},\underbrace{0,\dots,0}_{n-r-q} .
	\end{align*}
	Note that $\gamma_{i}^2/(\gamma_{i}^2+\alpha)$ increases with respect to $\gamma_i$. The desired result is obtained.
\end{proof}

% We can get one more thing from the above proof. Since $D_A$ and $D_L$ constitute diagonalizations of $A^TA$ and $L^TL$, it follows that the ranks of $A^TA$ and $M=L^TL$ are equal to the number of nonzero elements in $D_A$ and $D_L$, respectively. Thus we have $\mathrm{rank}(A)=\mathrm{rank}(A^TA)=r+q$ and $\mathrm{rank}(L)=\mathrm{rank}(M)=n-r$. 

% Let $G=A^TA+\alpha M$. Then by \cref{rank1}, $G$ is positive definite. Here we should notice two essential fact for the conjugate gradient (CG) method in 2-inner product space:
% \begin{itemize}
% 	\item The solution subspaces of CG applied to $A^TAx=A^Tb$ equal to the Krylov subspaces constructed by the symmetric Lanczos process of $A^{T}A$ with starting vector $A^Tb$.
% 	\item The Krylov subspace constructed by the above Lanczos process can be used to approximate eigenvectors of $A^TA$.
% \end{itemize}

Note that $G$ is positive definite. Inspired by \Cref{prop2.1}, we consider constructing solution spaces in the $G$-inner product space, where the $G$-inner product in $\mathbb{R}^{n}$ is defined by $\langle x, x' \rangle_G = x^TGx'$ for any $x,x'\in \mathbb{R}^{n}$. We hope those dominant $z_i$ can be well captured by the solution subspaces so that the iterative solution will incorporate main features described by those $z_i$.

%%---------------------------------------------------------
\section{Preconditioned Golub-Kahan bidiagonalization and subspace projection regularization algorithm}\label{sec3}
For standard-form regularization, the most popular LSQR algorithm bases upon the Golub-Kahan bidiagonalization (GKB) that constructs solution subspaces in the standard inner product space $\mathbb{R}^{n}$. In order to construct proper solution subspaces for regularizer $x^TMx$, we consider the GKB process using the $G$-inner product. Note that the standard GKB process needs $A$ and its transpose $A^T$. For the $G$-inner product case, we need the matrix-form expression of the adjoint of a linear operator between the $G$- and standard inner product Hilbert spaces $(\mathbb{R}^{n}, \langle \cdot , \cdot \rangle_G)$ and $(\mathbb{R}^{m}, \langle \cdot , \cdot \rangle_2)$.

\begin{Lem}\label{lem3.1}
	For the linear operator $A: (\mathbb{R}^{n}, \langle \cdot , \cdot \rangle_G) \to (\mathbb{R}^{m}, \langle \cdot , \cdot \rangle_2)$ between the two Hilbert spaces, define $A^{\transG}: (\mathbb{R}^{m}, \langle \cdot , \cdot \rangle_2) \to (\mathbb{R}^{n}, \langle \cdot , \cdot \rangle_G)$, which is the adjoint of $A$, by $\langle Ax, y \rangle_2=\langle x, A^{\transG}y \rangle_{G}$ for any $x\in\mathbb{R}^{n}$ and $y\in\mathbb{R}^{m}$. Then the matrix-form of $A^{\transG}$ is
	\begin{equation}\label{transG}
		A^{\transG} = G^{-1}A^{T}.
	\end{equation}
\end{Lem}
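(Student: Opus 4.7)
The plan is to verify directly that the candidate $G^{-1}A^T$ satisfies the defining identity for the adjoint, and then to invoke nondegeneracy of the $G$-inner product to conclude uniqueness.

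First I would unfold both sides of the adjoint identity in coordinates. The left-hand side is simply $\langle Ax, y\rangle_2 = (Ax)^T y = x^T A^T y$. On the right-hand side, for any candidate matrix $B$ playing the role of $A^{\transG}$, the $G$-inner product gives $\langle x, By\rangle_G = x^T G B y$. Setting these equal for all $x\in\mathbb{R}^n$ and $y\in\mathbb{R}^m$ yields the matrix equation $A^T = GB$. Since $G = A^TA + \alpha M$ with $\alpha > 0$ is positive definite under the assumption \cref{rank1} (so that $G$ is invertible), we can solve this to obtain $B = G^{-1}A^T$, matching \cref{transG}.

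Second, I would argue uniqueness. The adjoint is defined by the relation $\langle Ax, y\rangle_2 = \langle x, A^{\transG}y\rangle_G$ holding for \emph{all} $x$ and $y$; because $\langle \cdot,\cdot\rangle_G$ is a genuine inner product (positive definite), the vector $A^{\transG}y$ is determined uniquely by $y$, so the matrix form is also unique. Concretely, if $B_1$ and $B_2$ both satisfy the defining identity, then $x^T G(B_1 - B_2)y = 0$ for all $x,y$, which together with invertibility of $G$ forces $B_1 = B_2$.

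There is no real obstacle here: the result is essentially a one-line calculation once one writes the two inner products in matrix form and uses invertibility of $G$. The only subtlety worth flagging is the reliance on $G$ being invertible, which is guaranteed by $\mathcal{N}(A)\cap\mathcal{N}(L)=\{\boldsymbol{0}\}$; I would mention this briefly so that the reader sees where the standing assumption enters.
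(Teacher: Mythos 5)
Your proposal is correct and follows essentially the same route as the paper: write both inner products in matrix form, equate $x^TA^Ty = x^TGA^{\transG}y$ for all $x,y$ to get $A^T = GA^{\transG}$, and invert $G$. Your added remarks on uniqueness and on where the positive definiteness of $G$ (via \cref{rank1}) enters are fine elaborations of what the paper dispatches with a reference to standard functional analysis.
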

\begin{proof}
	First note that $A^{\transG}$ is well-defined, which can be found in any functional analysis textbook. Since $\langle Ax, y \rangle_{2}=x^TA^Ty$ and $\langle x, A^{\transG}y \rangle_{G}=x^TGA^{\transG}y$. Let $x^TA^Ty=x^TGA^{\transG}y$ for any vectors $x$ and $y$. It must holds that $A^T=GA^{\transG}$, and we immediately obtain \cref{transG}.
\end{proof}

For the standard inner product in $\mathbb{R}^{n}$, i.e. $G=I$, we have $A^{\transG}=A^T$, which is just the standard matrix transpose. Now we seek to construct solution subspaces $\mathcal{S}_k\subseteq (\mathbb{R}^{n}, \langle \cdot , \cdot \rangle_G)$ to iteratively solve $\min_{x\in\mathcal{S}_k}\|Ax-b\|_{2}$. With the help of \Cref{lem3.1}, the construction of $\mathcal{S}_k$ can be done by the GKB process of $A$ with starting vector $b$ between the two Hilbert spaces $(\mathbb{R}^{n}, \langle \cdot , \cdot \rangle_G)$ and $(\mathbb{R}^{m}, \langle \cdot , \cdot \rangle_2)$. This process can be written as the following recursive relations:
\begin{align}
	& \beta_1u_1 = b, \ \ \alpha_1w_1 = A^{\transG}u_1,  \label{GKB1} \\
	& \beta_{i+1}u_{i+1} = Aw_i - \alpha_iu_i, \label{GKB2} \\
	& \alpha_{i+1}w_{i+1} = A^{\transG}u_{i+1} - \beta_{i+1}w_i, \label{GKB3}
\end{align}
where $u_i\in(\mathbb{R}^{m}, \langle \cdot , \cdot \rangle_2)$, $w_i\in(\mathbb{R}^{n}, \langle \cdot , \cdot \rangle_G)$, and $\alpha_i$, $\beta_i$ should be computed such that $\|u_i\|_2=\|w_{i}\|_G=1$. Thus we have $u_1=b/\beta_1$ with $\beta_1=\|b\|_2$. Using the matrix-form expression of $A^{\transG}$ we have
\begin{align}\label{GKB32}
	\alpha_{i+1}w_{i+1}
	= G^{-1}A^Tu_{i+1}-\beta_{i+1}w_i 
\end{align}
with $\alpha_{i+1} = \|G^{-1}A^Tu_{i+1}-\beta_{i+1}w_{i}\|_G$. Note that for $i=0$ we define $w_0=\boldsymbol{0}$. 

Based on the coupled recursive relations \cref{GKB2,GKB32}, we get the preconditioned Golub-Kahan bidiagonalization (pGKB) process , which is summarized in \Cref{alg1}. The usage of the name ``preconditioned'' will be explained later.

\begin{algorithm}[htb]
	\caption{Preconditioned Golub-Kahan bidiagonalization (pGKB)}
	\begin{algorithmic}[1]\label{alg1}
		\Require $A\in\mathbb{R}^{m\times n}$, $b\in\mathbb{R}^{m}$, $M\in\mathbb{R}^{n\times n}$, $\alpha>0$
		\State $G=A^TA+\alpha M$
		\State $\beta_1=\|b\|_2$, \ $u_1=b/\beta_1$
		\State Compute $s$ by solving $Gs=A^Tu_1$
		\State $\alpha_1 = (s^TGs)^{1/2}$, \ $w_1=s/\alpha_1$
		\For {$i=1,2,\dots,k,$}
		\State $r=Aw_i-\alpha_iu_i$
		\State $\beta_{i+1}=\|r\|_2$, \ $u_{i+1}=r/\beta_{i+1}$
		\State Compute $s$ by solving $Gs=A^Tu_{i+1}$
		\State $s=s-\beta_{i+1}w_i$
		\State $\alpha_{i+1}= (s^TGs)^{1/2}$, \ $w_{i+1}=s/\alpha_{i+1}$
		\EndFor
		\Ensure $\{\alpha_i, \beta_i\}_{i+1}^{k+1}$, \ $\{u_i, w_i\}_{i+1}^{k+1}$
	\end{algorithmic}
\end{algorithm}

After $k$ steps, the pGKB generates two groups of vectors $\{u_i\}_{i=1}^{k+1}$ and $\{w_i\}_{i=1}^{k+1}$. Define two matrices as $U_k=(u_1,\dots,u_k)$ and $W_{k}=(w_1,\dots,w_k)$. Then by \cref{GKB1,GKB2,GKB32}, we have the matrix-form recursive relations:
\begin{align}
	& \beta_1U_{k+1}e_{1} = b, \label{GKB13} \\
	& AW_k = U_{k+1}B_k, \label{GKB23} \\
	& G^{-1}A^TU_{k+1} = W_kB_{k}^{T}+\alpha_{k+1}w_{k+1}e_{k+1}^T, \label{GKB33}
\end{align}
where
\begin{equation}
	B_{k}=\begin{pmatrix}
		\alpha_{1} & & & \\
		\beta_{2} &\alpha_{2} & & \\
		&\beta_{3} &\ddots & \\
		& &\ddots &\alpha_{k} \\
		& & &\beta_{k+1}
		\end{pmatrix}\in  \mathbb{R}^{(k+1)\times k}
\end{equation}
and $e_1$ and $e_{k+1}$ are the first and $(k+1)$-th columns of the identity matrix of order $k+1$, respectively. In fact, $U_{k+1}$ is a 2-orthonormal matrix and $W_k$ is a $G$-orthonormal matrix, which will be proved in the following lemma. Thus by \cref{GKB23} we have $B_k=U_{k+1}^TAW_k$, which implies that $B_k$ is a projection of $A$ onto two subspaces $\mathrm{span}\{U_{k+1}\}$ and $\mathrm{span}\{W_k\}$. Now we analyze the structure of these two subspaces.

\begin{Lem}\label{prop3.1}
	The group of vectors $\{u_i\}_{i=1}^{k}$ is a 2-orthonormal basis of the Krylov subspace 
	\begin{equation}
		\mathcal{K}_k(AG^{-1}A^T, b) = \mathrm{span}\{(AG^{-1}A^T)^ib\}_{i=0}^{k-1},
	\end{equation}
	and $\{w_i\}_{i=1}^{k}$ is a $G$-orthonormal basis of the Krylov subspace 
	\begin{equation}
		\mathcal{K}_k(G^{-1}A^TA, G^{-1}A^Tb) = \mathrm{span}\{(G^{-1}A^TA)^iG^{-1}A^Tb\}_{i=0}^{k-1}.
	\end{equation}
\end{Lem}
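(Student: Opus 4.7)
The plan is to proceed by a standard induction on $k$, leveraging the recursive structure of \Cref{alg1}. First I would establish the orthonormality properties: that $U_{k+1}$ is $2$-orthonormal and $W_k$ is $G$-orthonormal. The normalization choices $\beta_{i+1} = \|r\|_2$ and $\alpha_{i+1} = (s^TGs)^{1/2}$ guarantee $\|u_i\|_2 = \|w_i\|_G = 1$ by construction. For cross-orthogonality I would induct on $i$, using the recursions \cref{GKB2,GKB32} together with \Cref{lem3.1} (i.e. $\langle Ax,y\rangle_2 = \langle x, G^{-1}A^Ty\rangle_G$) so that, for example, $\langle u_{i+1},u_j\rangle_2 = \beta_{i+1}^{-1}(\langle Aw_i,u_j\rangle_2 - \alpha_i\langle u_i,u_j\rangle_2) = \beta_{i+1}^{-1}(\langle w_i, G^{-1}A^Tu_j\rangle_G - \alpha_i\delta_{ij})$, which can be expanded via \cref{GKB32} and reduced to previously established orthogonality relations. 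The computation for $\langle w_{i+1},w_j\rangle_G$ is symmetric.

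Next I would verify the Krylov subspace containment by induction. The base cases are immediate: $u_1 = b/\beta_1 \in \mathcal{K}_1(AG^{-1}A^T,b)$, and $w_1 = G^{-1}A^Tu_1/\alpha_1$ is a scalar multiple of $G^{-1}A^Tb$, hence lies in $\mathcal{K}_1(G^{-1}A^TA, G^{-1}A^Tb)$. For the inductive step, the key algebraic identities are
\begin{equation*}
A(G^{-1}A^TA)^j G^{-1}A^Tb = (AG^{-1}A^T)^{j+1}b, \quad G^{-1}A^T(AG^{-1}A^T)^j b = (G^{-1}A^TA)^j G^{-1}A^Tb,
\end{equation*}
which show that multiplication by $A$ maps $\mathcal{K}_i(G^{-1}A^TA,G^{-1}A^Tb)$ into $\mathcal{K}_{i+1}(AG^{-1}A^T,b)$ and multiplication by $G^{-1}A^T$ maps $\mathcal{K}_{i+1}(AG^{-1}A^T,b)$ into $\mathcal{K}_{i+1}(G^{-1}A^TA,G^{-1}A^Tb)$. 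Combined with the recursions $\beta_{i+1}u_{i+1} = Aw_i - \alpha_iu_i$ and $\alpha_{i+1}w_{i+1} = G^{-1}A^Tu_{i+1} - \beta_{i+1}w_i$, this yields the desired containments $u_{i+1} \in \mathcal{K}_{i+1}(AG^{-1}A^T,b)$ and $w_{i+1} \in \mathcal{K}_{i+1}(G^{-1}A^TA,G^{-1}A^Tb)$.

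Finally, to upgrade containment to a basis statement, I would observe that the orthonormality established in the first step forces $\{u_i\}_{i=1}^k$ and $\{w_i\}_{i=1}^k$ to be linearly independent (assuming no breakdown, i.e.\ all $\alpha_i,\beta_i>0$), so each family consists of $k$ linearly independent vectors inside a Krylov subspace of dimension at most $k$; hence the spans coincide.

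The main obstacle I anticipate is bookkeeping in the orthogonality induction, particularly in handling the mixed inner products $\langle u_j, Aw_i\rangle_2$ and $\langle w_j, G^{-1}A^Tu_i\rangle_G$ cleanly, since they couple the two recursions through \Cref{lem3.1}. Once one carefully applies the adjoint identity $A^{\transG} = G^{-1}A^T$ to swap inner products, the argument becomes a direct analogue of the classical GKB orthogonality proof and the remaining work is routine.
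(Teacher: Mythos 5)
Your argument is correct: the induction on orthogonality via the adjoint identity $A^{\transG}=G^{-1}A^T$, the Krylov containment via the two commutation identities, and the dimension count to upgrade containment to a basis all go through (under the no-breakdown assumption $\alpha_i,\beta_i\neq 0$, which the paper also makes implicitly). However, your route differs from both of the paper's proofs. The paper's first proof simply \emph{invokes} the known property of the GKB process between the Hilbert spaces $(\mathbb{R}^{n},\langle\cdot,\cdot\rangle_G)$ and $(\mathbb{R}^{m},\langle\cdot,\cdot\rangle_2)$ and substitutes $AA^{\transG}=AG^{-1}A^T$, $A^{\transG}A=G^{-1}A^TA$; your induction is essentially a self-contained verification of that cited property, which buys rigor at the cost of the bookkeeping you anticipate. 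The paper's second proof is genuinely different in mechanism: it takes the Cholesky factorization $G=R^TR$, sets $\bar{A}=AR^{-1}$ and $v_i=Rw_i$, observes that \cref{GKB13,GKB23,GKB33} become the standard GKB relations for $\bar{A}$ with starting vector $b$, applies the classical result to get $\mathrm{span}\{v_i\}=\mathcal{K}_k(R^{-T}A^TAR^{-1},R^{-T}A^Tb)$, and maps back through $w_i=R^{-1}v_i$. That change of variables is worth knowing even if you keep your direct proof: it is what justifies the name ``preconditioned'' GKB, and the paper reuses exactly this $R$-transformation later in the proof of \Cref{filter_express} to obtain the filtered GSVD expansion, so the Cholesky viewpoint pays for itself beyond this lemma.
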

\begin{proof}
	In order to get more insights into the pGKB process, we give two proofs. 

	\textit{The first proof.} This proof exploits the property of GKB process of $A$ between the two Hilbert spaces $(\mathbb{R}^{n}, \langle \cdot , \cdot \rangle_G)$ and $(\mathbb{R}^{m}, \langle \cdot , \cdot \rangle_2)$, which states that $\{u_i\}_{i=1}^{k}$ and $\{z_i\}_{i=1}^{k}$ are the 2-orthonormal basis and $G$-orthonormal basis of the Krylov subspaces $\mathcal{K}_k(AA^{\transG}, b)$ and $\mathcal{K}_k(A^{\transG}A, A^{\transG}b)$, respectively. Note that
	$AA^{\transG}=AG^{-1}A^T$, $A^{\transG}A=G^{-1}A^TA$ and $A^{\transG}b=G^{-1}A^Tb$. The proof is completed.

	\textit{The second proof.} Suppose the Cholesky factorization of $G$ is $G=R^TR$. Let $\bar{A}=AR^{-1}$, $v_i=Rw_i$ and $V_k=(v_1,\dots,v_k)$. Then by \cref{GKB23,GKB33} we have
	\begin{equation}\label{GKB4}
		\bar{A}V_k = U_{k+1}B_k, \ \ \ 
		\bar{A}^TU_{k+1} = V_{k}B_{k}^{T}+\alpha_{k+1}v_{k+1}e_{k+1}^{T}.
	\end{equation}
	Since $\|w_i\|_G=1$, we have $\|v_i\|_2=(w_{i}R^{T}Rw_{i})^{1/2}=(w_{i}^{T}Gw_i)^{1/2}=1$. Combining relations \cref{GKB13,GKB4} and using $\|u_i\|_2=\|v_{i}\|_2$, we know that $\{u_i\}_{i=1}^{k}$ and $\{v_i\}_{i=1}^{k}$ are the Lanczos vectors generated by the GKB process of $\bar{A}$ with starting vector $b$ under the standard inner product. Therefore, $\{u_i\}_{i=1}^{k}$ and $\{v_i\}_{i=1}^{k}$ are 2-orthonormal bases of Krylov subspaces $\mathcal{K}_k(\bar{A}\bar{A}^T, b)=\mathcal{K}_k(AG^{-1}A^T, b)$ and $\mathcal{K}_k(\bar{A}^T\bar{A}, \bar{A}^Tb)=\mathcal{K}_k(R^{-T}A^TAR^{-1}, R^{-T}A^Tb)$, respectively. Using $w_i=R^{-1}v_i$, we obtain that $\{w_i\}_{i=1}^{k}$ is a $G$-orthonormal basis of the subspace 
	\[R^{-1}\mathcal{K}_k(\bar{A}^T\bar{A}, \bar{A}^Tb)= R^{-1}\mathrm{span}\{(R^{-T}A^TAR^{-1})^iR^{-T}A^Tb\}_{i=0}^{k-1}.\]
	Note that
	\begin{align*}
		R^{-1}(R^{-T}A^TAR^{-1})^iR^{-T}A^Tb
		&= R^{-1}R^{-T}A^T(AR^{-1}R^{-T}A^T)^ib \\
		&= G^{-1}A^T(AG^{-1}A^T)^ib \\
		&= (G^{-1}A^TA)^iG^{-1}A^Tb .
	\end{align*}
	The proof is completed.
\end{proof}

The second proof explains why we use the name ``preconditioned'' for \Cref{alg1}. The pGKB is essentially the standard GKB process of the preconditioned matrix $AR^{-1}$ with starting vector $b$, where $R^{-1}$ is the right preconditioner. It reduces $AR^{-1}$ to a bidiagonal form $B_k$ while generating two orthonormal matrices $U_{k+1}$ and $V_{k+1}$, and the desired vectors $w_i$ are obtained by $w_i=R^{-1}v_i$. In \Cref{alg1}, the Cholesky factor $R$ and $R^{-1}$ need not to be explicitly computed to get $w_i$, while instead a linear system $Gs=A^Tu_i$ must be solved.

Based on the pGKB process, we propose the subspace projection regularization algorithm by letting $\mathcal{S}_k=\mathrm{span}\{W_k\}$ and solving \cref{subspace_regu}. Suppose the $k$-step pGKB does not terminate, that is, $\alpha_i, \beta_i\neq0$ for $1\leq i \leq k$. Then $B_k$ has full column rank, and thus 
\begin{align*}
	\min_{x=W_ky}\|Ax-b\|_2 
	&= \min_{y\in\mathbb{R}^{k}}\|AW_ky-U_{k+1}\beta_1e_{1}\|_2 \\
	&= \min_{y\in\mathbb{R}^{k}}\|U_{k+1}B_ky-U_{k+1}\beta_1e_{1}\|_2 \\
	&= \min_{y\in\mathbb{R}^{k}}\|B_ky-\beta_1e_{1}\|_2,
\end{align*}
which has a unique solution. Therefore, the solution to \cref{subspace_regu} is
\begin{equation}\label{proj_y}
	x_k = W_ky_k, \ \ \ y_k=\argmin_{y\in\mathbb{R}^{k}}\|B_ky-\beta_1e_{1}\|_2
\end{equation}
for $k=1,2\dots$. This is a very similar procedure to the LSQR solver for least squares problems. In practical computations, there is a recursive formula to update $x_{k+1}$ from $x_k$ without solving the projected problem $\min_{y}\|B_ky-\beta_1e_{1}\|_2$ at each iteration. This will be discussed lated. Now we investigate the structure of solution subspace $\mathcal{S}_k$.

\begin{Prop}\label{prop3.2}
	Let the solution subspace be $\mathcal{S}_k=\mathrm{span}\{W_k\}$. Then 
	\begin{equation}\label{Sk}
		\mathcal{S}_k = \mathrm{span}\{Z(D_{\alpha}^{-1}D_{A}^{T}D_{A})^{i}D_{\alpha}^{-1}D_{A}^{T}U_{A}^{T}b\}_{i=0}^{k-1}
	\end{equation}
\end{Prop}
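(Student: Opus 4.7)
The plan is to combine the Krylov-subspace characterization of $\mathrm{span}\{W_k\}$ from \Cref{prop3.1} with the diagonalizations of $A^TA$, $M$, and $G$ that were already derived in the proof of \Cref{prop2.1}. Specifically, I would first invoke the second conclusion of \Cref{prop3.1}, namely
\begin{equation*}
\mathcal{S}_k \;=\; \mathrm{span}\{W_k\} \;=\; \mathcal{K}_k(G^{-1}A^TA,\, G^{-1}A^Tb) \;=\; \mathrm{span}\{(G^{-1}A^TA)^{i}\,G^{-1}A^Tb\}_{i=0}^{k-1},
\end{equation*}
so that the entire task reduces to expressing the generator matrix $G^{-1}A^TA$ and the starting vector $G^{-1}A^Tb$ in GSVD coordinates.

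Next I would recall from the proof of \Cref{prop2.1} that $A^TA = Z^{-T}D_A^TD_AZ^{-1}$ and $G = Z^{-T}D_{\alpha}Z^{-1}$ where $D_\alpha$ is the diagonal matrix given in \cref{D_alpha}. Inverting $G$ gives $G^{-1} = Z D_\alpha^{-1} Z^T$, and then straightforward cancellation yields
\begin{equation*}
G^{-1}A^TA \;=\; Z\,D_\alpha^{-1}D_A^TD_A\,Z^{-1}, \qquad G^{-1}A^T \;=\; Z\,D_\alpha^{-1}D_A^T\,U_A^T,
\end{equation*}
where the second identity uses $A^T = Z^{-T}D_A^TU_A^T$ from \cref{gsvd}. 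The crucial observation is that the similarity $G^{-1}A^TA = Z(\cdot)Z^{-1}$ causes telescoping: for any $i\geq 0$,
\begin{equation*}
(G^{-1}A^TA)^{i}\,G^{-1}A^Tb \;=\; Z\,(D_\alpha^{-1}D_A^TD_A)^{i}\,Z^{-1}\,Z\,D_\alpha^{-1}D_A^T\,U_A^Tb \;=\; Z\,(D_\alpha^{-1}D_A^TD_A)^{i}D_\alpha^{-1}D_A^T\,U_A^Tb.
\end{equation*}
Taking the span over $i=0,\dots,k-1$ gives exactly \cref{Sk}.

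I do not anticipate any real obstacle here; the argument is an exercise in pushing the GSVD through a Krylov subspace. The only mildly subtle point is that $Z$ is invertible but not orthogonal, so one cannot rearrange $Z$ and $Z^{-1}$ freely; however this is resolved automatically by the telescoping above, and no extra assumption on $D_\alpha$ beyond invertibility (which is guaranteed by \cref{rank1} and $\alpha>0$) is needed. I would not bother verifying the dimension of the span explicitly, since \Cref{prop3.1} already identifies it as the $k$-dimensional subspace $\mathcal{K}_k(G^{-1}A^TA,G^{-1}A^Tb)$.
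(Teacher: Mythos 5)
Your proposal is correct and follows essentially the same route as the paper: invoke the Krylov characterization of $\mathrm{span}\{W_k\}$ from \Cref{prop3.1}, write $G^{-1}=ZD_{\alpha}^{-1}Z^{T}$ and $A^{T}=Z^{-T}D_{A}^{T}U_{A}^{T}$ via the GSVD as in \Cref{prop2.1}, and let the $Z^{-1}Z$ factors telescope. No gaps.
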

\begin{proof}
	By \Cref{prop3.1} we have $\mathcal{S}_k = \mathrm{span}\{(G^{-1}A^TA)^iG^{-1}A^Tb\}_{i=0}^{k-1}$.
	Using the GSVD expression \cref{gsvd} and \Cref{prop2.1}, we have
	\begin{align*}
		& G^{-1}A^Tb = ZD_{\alpha}^{-1}Z^{T}Z^{-T}D_{A}^{T}U_{A}^{T}b
		= ZD_{\alpha}^{-1}D_{A}^{T}U_{A}^{T}b , \\ 
		& G^{-1}A^TA = ZD_{\alpha}^{-1}Z^{T}Z^{-T}D_{A}^{T}D_{A}Z^{-1}
		= ZD_{\alpha}^{-1}D_{A}^{T}D_{A}Z^{-1} .
	\end{align*}
	Therefore, we obtain
	\begin{align*}
		(G^{-1}A^TA )^iG^{-1}A^Tb
		&= (ZD_{\alpha}^{-1}D_{A}^{T}D_{A}Z^{-1})^{i}ZD_{\alpha}^{-1}D_{A}^{T}U_{A}^{T}b \\
		&= Z(D_{\alpha}^{-1}D_{A}^{T}D_{A})^{i}Z^{-1}ZD_{\alpha}^{-1}D_{A}^{T}U_{A}^{T}b \\
		&= Z(D_{\alpha}^{-1}D_{A}^{T}D_{A})^{i}D_{\alpha}^{-1}D_{A}^{T}U_{A}^{T}b
	\end{align*}
	for $i=0,1,\dots,k-1$, which is the desired result.
\end{proof}

Note from \cref{Sk} that the solution subspace $\mathcal{S}_k$ is directly related to the TGSVD solution subspace $\mathrm{span}\{Z_k\}$. Hence we can expect that the iterative regularized solution can include prior properties about the desired solution encoded by the regularizer $x^TMx$. We emphasis that the good property of $\mathcal{S}_k$ attributes to the proper choice of preconditioner $R$ used to construct $W_k$. Here the term ``preconditioned" is not aimed at accelerating convergence of iterative solver but rather at forcing some specific regularity into the associated solution subspace. In fact, the regularized solution $x_k$ has a filtered GSVD expansion form, which sheds light on the regularization effect of the proposed algorithm. We will analyze it in the next section.

Now we discuss how to efficiently update solutions and stop the iteration early to get a good final regularized solution. By applying Givens QR factorization to \cref{proj_y}, the updating procedure for $x_k$ can be derived similarly to that for LSQR; see  \cite{Paige1982} for details. Starting from $x_0=\mathbf{0}$, $x_i$ is computed recursively by the following algorithm. 

\begin{algorithm}[htb]
	\caption{Updating procedure}\label{alg2}
	\begin{algorithmic}[1]
		\State {Let $x_{0}=\mathbf{0},\ p_{1}=w_{1}, \ \bar{\phi}_{1}=\beta_{1},\ \bar{\rho}_{1}=\alpha_{1}$} 
		\For{$i=1,2,\ldots,k,$}
		\State $\rho_{i}=(\bar{\rho}_{i}^{2}+\beta_{i+1}^{2})^{1/2}$
		\State $c_{i}=\bar{\rho}_{i}/\rho_{i},\ s_{i}=\beta_{i+1}/\rho_{i}$
		\State$\theta_{i+1}=s_{i}\alpha_{i+1},\ \bar{\rho}_{i+1}=-c_{i}\alpha_{i+1}$
		\State $\phi_{i}=c_{i}\bar{\phi}_{i},\ \bar{\phi}_{i+1}=s_{i}\bar{\phi}_{i} $
		\State $x_{i}=x_{i-1}+(\phi_{i}/\rho_{i})p_{i} $
		\State $p_{i+1}=w_{i+1}-(\theta_{i+1}/\rho_{i})p_{i}$
		\EndFor
	\end{algorithmic}
\end{algorithm}

The identity
\begin{equation}\label{res}
	\bar{\phi}_{k+1}=\|B_ky_{k}-\beta_1 e_{1}\|_2 = \|Ax_k-b\|_2,
\end{equation}
holds, similar to that for LSQR \cite{Paige1982}. Thus the residual norm can be updated very quickly by \Cref{alg2} without explicitly computing $\|Ax_k-b\|_2$. Note that $\bar{\phi}_{k+1}$ decreases monotonically since $x_k$ minimizes $\|Ax-b\|_2$ in the gradually expanding subspace $\mathrm{span}\{W_k\}$.

In order to estimate the optimal early stopping iteration, if we have an accurate estimate of $\|e\|$, the discrepancy principle (DP) is a common choice. It states that we should stop iteration at the first $k$ satisfying 
\begin{equation}\label{discrepancy}
	\bar{\phi}_{k+1}=\|Ax_k-b\|_2 \leq \tau\|e\|_2,
\end{equation}
where $\tau>1$ slightly. The discrepancy principle method usually suffers from under-estimate the optimal $k$ and thus the solution is over-regularized. Another approach is the L-curve criterion, which does not need $\|e\|$ in advance. We plot the L-curve 
\begin{equation}
	\left(\log\|Ax_{k}-b\|_2,\log(x_{k}^TMx_k)^{1/2}\right) = \left(\log\bar{\phi}_{k+1},\log(x_{k}^TMx_k)^{1/2}\right)
\end{equation}
and choose the $k$ corresponding to the corner of it as a good estimate of the optimal early stopping iteration. The whole process of the pGKB based subspace projection regularization (pGKB\_SPR) algorithm is summarized in \Cref{alg3}.

\begin{algorithm}[htb]
	\caption{pGKB based subspace projection regularization (pGKB\_SPR)}\label{alg3}
	\begin{algorithmic}[1]
		\Require $A$, $M$, $\alpha>0$, $b$, $x_{0}=\mathbf{0}$  \Comment{require $\tau\|e\|_2$ for DP}
		\For{$k=1,2,\ldots,$}
		\State Compute $u_k$, $w_k$, $\alpha_k$, $\beta_k$ by \texttt{pGKB} 
		\State Compute $\rho_k$, $\theta_{k+1}$, $\bar{\rho}_{k+1}$, $\phi_{k}$, $\bar{\phi}_{k+1}$ by \texttt{updating procedure}
		\State Compute $x_k$, $p_{k+1}$ by \texttt{updating procedure} 
		\If{Early stopping criterion is satisfied}  \Comment{DP or L-curve}
		\State Denote the estimated iteration by $k_1$, terminate iteration at $k_1$
		\EndIf
		\EndFor
		\Ensure Final regularized solution $x_{k_1}$  
	\end{algorithmic}
\end{algorithm}

At the end of this section, we discuss several implementation details for the pGKB process, which are important for increasing computational efficiency, especially for large-scale problems. There are three main issues that need to be considered.
\begin{itemize}
	\item The pGKB process has the structure of nested inner-out iteration, where at each outer iteration, a linear system $Gs=A^Tu_i$ needs to be solved. Although for some cases that $A$ and $M$ have special structures, this system can be solved via a direct matrix factorization, for most large-scale problems, however, the only advisable approach is using an iterative solver, such as the conjugate gradient (CG) or minimum residual (MINRES) algorithm. Meanwhile, The value of $\alpha>0$ should be set such that the condition number of $G=A^TA+\alpha M$ is small to make the iterative solver converge  quickly. The default value $\alpha=1$ is often fruitful, and we also try other moderate values $\alpha\in [0.001, 100]$ in the numerical experiments.
	\item For large-scale problems, iteratively solving $Gs=A^Tu_i$ may still be costly, especially when the solution accuracy is high. Since there is a limit of the accuracy of the best regularized solution, which is affected by the noise level and other factors, we can expect that the accuracy of the final regularized solution will not be affected even if inner iterations are not computed exactly. When the noise level $\varepsilon=\|e\|_2/\|b_{\text{true}}\|_2$ is not extremely small, we numerically find that the solution accuracy of $Gs=A^Tu_i$ can be relaxed considerably (the stopping tolerance for the MATLAB function \texttt{pcg.m} can be set larger than the default value \texttt{1e-6}), which improves the overall efficiency of the algorithm.
	\item Due to the computational errors and rounding errors coming from inaccurate inner iterations and finite precision arithmetic, respectively, the 2- and $G$-orthogonality of computed $u_{i}$ and $w_i$ gradually lose, which leads to a delay of convergence of iterative solutions. In our implementation, we do full reorthogonalization on $u_i$ and $w_i$ to maintain their numerical 2- and $G$-orthogonality to ensure the normal convergence behavior.
\end{itemize}
In this paper, the above issues are only investigated numerically. We will make theoretical analysis of the required accuracy of inner iterations as well as efficient reorthogonalization strategies in the forthcoming work.

%%------------------------------------------------------------------------------
\section{Regularization effect of pGKB\_SPR}\label{sec4}
The dominant GSVD components of $\{A,L\}$ can be approximated well by the solution subspaces generated by pGKB as the iteration proceeds. This is a desirable property for a regularization method to ensure that the obtained solution captures main information corresponding to dominant right generalized singular vectors while filtered out noise corresponding to others.

\begin{Lem}\label{lanc_process}
	Suppose the singular values of $B_k$ are $\theta_{1}^{(k)}>\theta_{2}^{(k)}>\cdots>\theta_{k}^{(k)}>0$. Then $(\theta_{i}^{(k)})^2$ converges to one of the generalized eigenvalues of 
	$A^TAz=\xi Gz$ as $k$ increases.
\end{Lem}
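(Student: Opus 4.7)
The plan is to reduce the statement to a classical Lanczos convergence result by exploiting the equivalence, already established in the second proof of \Cref{prop3.1}, between pGKB applied to $A$ and standard GKB applied to $\bar{A}=AR^{-1}$, where $G=R^TR$ is the Cholesky factorization. From \cref{GKB4} we have $\bar{A}V_k = U_{k+1}B_k$ and $\bar{A}^TU_{k+1} = V_kB_k^T + \alpha_{k+1}v_{k+1}e_{k+1}^T$ with $U_{k+1}$ and $V_{k+1}$ orthonormal in the standard inner product. Hence the matrix $B_k$ is precisely the lower bidiagonal matrix produced by the standard Golub--Kahan bidiagonalization of $\bar{A}$ with starting vector $b$, and its singular values $\theta_i^{(k)}$ are the corresponding Lanczos Ritz values (equivalently, the singular values of the Galerkin projection $U_{k+1}^T\bar{A}V_k$).

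Next I would translate squared singular values of $\bar{A}$ into generalized eigenvalues of the pencil $(A^TA,G)$. If $\bar{A}^T\bar{A}v = \mu v$ and we set $z = R^{-1}v$, then $R^{-T}A^TAz = \mu Rz$, which multiplied by $R^T$ gives $A^TAz = \mu Gz$. Thus the eigenvalues of $\bar{A}^T\bar{A}$ coincide exactly with the generalized eigenvalues of $A^TAz=\xi Gz$ listed in \cref{gen_eigen}, and the squared singular values $(\theta_i^{(k)})^2$ of $B_k$ are simultaneously the Ritz values of the symmetric Lanczos tridiagonalization of $\bar{A}^T\bar{A}$ built implicitly by the GKB of $\bar{A}$ (since $B_k^TB_k$ is the Jacobi matrix produced by the Lanczos process on $\bar{A}^T\bar{A}$ with starting vector proportional to $\bar{A}^Tb$).

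Having made this identification, the convergence claim follows from the standard Lanczos Ritz value convergence theorem applied to the symmetric positive semidefinite matrix $\bar{A}^T\bar{A}$ (see, e.g., the Kaniel--Paige--Saad bounds): as $k$ grows, each ordered Ritz value $(\theta_i^{(k)})^2$ of the Lanczos tridiagonalization converges monotonically to an eigenvalue of $\bar{A}^T\bar{A}$, extremal Ritz values converging first and fastest. Pulling this back through the $z=R^{-1}v$ correspondence yields that $(\theta_i^{(k)})^2$ converges to one of the generalized eigenvalues of $A^TAz=\xi Gz$, which is the desired statement.

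The only substantive obstacle is the usual one in finite precision: the Lanczos convergence theorems are stated in exact arithmetic and require the $G$-orthogonality of $\{w_i\}$ and $2$-orthogonality of $\{u_i\}$, which may be lost as noted at the end of \Cref{sec3}. For the lemma as stated, however, it suffices to invoke the exact-arithmetic theory (full reorthogonalization, as recommended in the implementation, restores this in practice); no new analysis is needed beyond the change of variable and the quoted classical convergence result.
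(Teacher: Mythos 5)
Your proof is correct, and it reaches the same essential identification as the paper --- namely that $B_k^TB_k$ is the tridiagonal matrix of a Lanczos process whose target spectrum is exactly the set of generalized eigenvalues of $A^TAz=\xi Gz$ --- but it gets there by a different formal route. The paper works intrinsically in the $G$-inner product: it derives $A^TAW_k = GW_k(B_k^TB_k)+\alpha_{k+1}\beta_{k+1}Gw_{k+1}e_{k+1}^T$ directly from \cref{GKB23} and \cref{GKB33}, identifies $T_k=B_k^TB_k$ as the Ritz--Galerkin projection of $A^TA$ onto $\mathrm{span}\{W_k\}$ with starting vector $G^{-1}A^Tb$, and then cites the convergence theory for the Lanczos method applied to the generalized eigenproblem of the pair $\{A^TA,G\}$. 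You instead pass through the Cholesky factor $G=R^TR$, recognize $B_k$ as the standard GKB bidiagonal of $\bar{A}=AR^{-1}$ (as in the second proof of \Cref{prop3.1}), verify via $z=R^{-1}v$ that the spectrum of $\bar{A}^T\bar{A}$ equals the generalized spectrum of the pencil, and invoke the classical single-matrix Kaniel--Paige--Saad theory. The two arguments are congruent (literally: your Lanczos process is the paper's under the change of variables $v=Rw$), and each has a virtue: yours needs only the textbook Euclidean Lanczos convergence theorem and makes the "preconditioned GKB $=$ GKB of $AR^{-1}$" picture do all the work, while the paper's avoids ever invoking a factorization of $G$, which is more faithful to the algorithm as actually implemented (where $R$ is never formed) and generalizes more cleanly if one only has $G$-inner products available. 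One minor caution: your claim of \emph{monotone} convergence of each ordered Ritz value is stronger than what the lemma needs and should be attributed to the interlacing property of successive $T_k$ rather than to Kaniel--Paige--Saad per se; it is true but tangential, and the paper does not assert it here.
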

\begin{proof}
	Since $\alpha_1w_1=G^{-1}A^Tu_1$ and $\beta_1u_1=b$, we have 
	\begin{equation}\label{lanc1}
		\alpha_1\beta_1w_1 = G^{-1}A^Tb .
	\end{equation}
	By \cref{GKB23,GKB33}, we have
	\begin{align}\label{lanc2}
		A^TAW_k
		&= A^TU_{k+1}B_k  \nonumber \\
		&= (GW_kB_{k}^T+\alpha_{k+1}Gw_{k+1}e_{k+1}^T)B_k \nonumber \\
		&= GW_k(B_{k}^{T}B_k)+\alpha_{k+1}\beta_{k+1}Gw_{k+1}e_{k+1}^T .
	\end{align}
	Define $T_k$ as
	\[T_k= B_{k}^{T}B_k=
	\begin{pmatrix}
		\alpha_{1}^{2}+\beta_{2}^{2} & \alpha_2\beta_2 & & \\
		\alpha_2\beta_2 & \alpha_{2}^{2}+\beta_{3}^{2} & \ddots & \\
		 & \ddots & \ddots & \alpha_k\beta_k \\
		  & & \alpha_k\beta_k & \alpha_{k}^{2}+\beta_{k+1}^{2}
	\end{pmatrix} ,
	\]
	which is a symmetric tridiagonal matrix. Combining \cref{lanc1,lanc2} we find that $T_k$ is the Ritz-Galerkin projection of $A^TA$ onto subspace $\mathrm{span}\{W_k\}$ that is generated by the Lanczos tridiagonalization process of $A^TA$ with starting vector $G^{-1}A^Tb$ under the $G$-inner product \cite[\S 5.5]{Bai2000}.
	
	By the convergence theory of Lanczos tridiagonalization for generalized eigenvalue problem of matrix pair $\{A^TA, G\}$, the eigenvalues of $T_k$ will converge to the generalized eigenvalues of $A^TAz=\xi Gz$ as $k$ increases; see \cite{Hetmaniuk2006} and \cite[\S 10.1.5]{Golub2013}. Note that the $i$-th eigenvalue of $T_k$ is $(\theta_{i}^{(k)})^2$. The proof is completed.
\end{proof}

The convergence behavior of $\theta_{i}^{(k)}$ is governed by the same Kaniel-Paige-Saad theory as in the standard case for a single matrix; see e.g.\cite[\S 10.1.5]{Golub2013}. It states that we get a faster convergence to those generalized eigenvalues that are in the two ends of the spectrum and well separated. For simplicity of expression, we use $\xi_i$ to denote the $i$-th largest generalized eigenvalue of $A^TAz=\xi Gz$. By \cref{prop2.1}, it means $\xi_1=\cdots=\xi_r=1$, $\xi_i=\gamma_{i}^2/(\gamma_{i}^2+\alpha)$ for $r+1\leq i\leq r+q$ and $\xi_{r+q+1}=\cdots=\xi_n=0$. Note that $\xi_i$ are decreasing and clustered at zero. Thus we can get a faster convergence of $(\theta_{i}^{(k)})^2$ to some largest generalized eigenvalues $\xi_i$, while the corresponding generalized eigenvectors $z_i$ are also preferentially approximated by Ritz vectors extracted from $\mathrm{span}\{W_k\}$. Since the regularized solution lies in $\mathrm{span}\{W_k\}$, this ensures that the dominant information encoded by some leading $z_i$ can be captured by pGKB\_SPR while the noisy components are filtered out. To be more precisely, we have the following result.

\begin{Thm}\label{filter_express}
	Suppose $A$ has full column rank, which means $r+q=n$ in the GSVD of $\{A,L\}$. Let $\sigma_i=1$ for $1\leq i\leq r$. Then the $k$-th regularized solution obtained by pGKB\_SPR can be written as
	\begin{equation}\label{filter_solution}
		x_k = \sum_{i=1}^{n}f_{i}^{(k)}\frac{u_{A,i}^{T}b}{\sigma_i}z_i
	\end{equation}
	with filter factors
	\begin{equation}\label{filter1}
		f_{i}^{(k)} = 1 - \prod_{j=1}^{k}\frac{(\theta_{j}^{(k)})^2-\xi_i}{(\theta_{j}^{(k)})^2}, \ \ i=1,\dots,n.
	\end{equation}
\end{Thm}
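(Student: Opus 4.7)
The plan is to reduce \Cref{filter_express} to the classical LSQR filter factor expansion applied to a right-preconditioned operator, then translate back via GSVD bookkeeping. The handle for this is the second proof of \Cref{prop3.1}, which already tells us that for any Cholesky factor $R$ with $G = R^T R$, the pGKB process on $A$ with starting vector $b$ coincides with the standard GKB process on $\bar A := AR^{-1}$ with starting vector $b$: the two share the same bidiagonal $B_k$, the same left basis $U_{k+1}$, and their right bases are related by $V_k = RW_k$.

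First I would observe that the projected least-squares problem $\min_y\|B_k y - \beta_1 e_1\|_2$ appears identically in pGKB\_SPR (via \cref{proj_y}) and in standard LSQR on $\bar A$, so the coordinate vector $y_k$ is the same in both. Combined with $V_k = RW_k$, this yields $\bar x_k = V_k y_k = R\,x_k$, and hence $x_k = R^{-1} \bar x_k$. The classical LSQR filter expansion (\cite{Bjorck1988}) then gives
\[
\bar x_k = \sum_i f_i^{(k)}\,\frac{\bar u_i^T b}{\bar\sigma_i}\,\bar v_i,
\qquad
f_i^{(k)} = 1 - \prod_{j=1}^k \frac{(\theta_j^{(k)})^2 - \bar\sigma_i^{\,2}}{(\theta_j^{(k)})^2},
\]
where $\bar A = \bar U\bar\Sigma \bar V^T$ is the SVD of $\bar A$ and the $\theta_j^{(k)}$ are the singular values of $B_k$ (equivalently, the square roots of the Ritz values of $\bar A^T\bar A$ on $\mathrm{span}(V_k)$). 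The only remaining task is to identify that SVD explicitly and then apply $R^{-1}$.

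Second, I would extract the SVD of $\bar A$ directly from the GSVD of $\{A,L\}$. The identity $Z^T G Z = D_\alpha$, which drops out of the proof of \Cref{prop2.1} via $G = Z^{-T} D_\alpha Z^{-1}$, implies that $P := R Z D_\alpha^{-1/2}$ satisfies $P^T P = I_n$; under the hypothesis $r+q = n$ it is square, hence orthogonal, and therefore $R^{-1} = Z D_\alpha^{-1/2} P^T$. Substituting $A = U_A D_A Z^{-1}$ gives
\[
\bar A = U_A D_A Z^{-1} R^{-1} = U_A\,(D_A D_\alpha^{-1/2})\, P^T.
\]
Under $r+q=n$ the matrix $D_A D_\alpha^{-1/2}$ is diagonal with entries $\sqrt{\xi_i}$ in decreasing order (using the conventions $\sigma_i = 1$ and $\xi_i = 1$ for $1\le i\le r$), so this already is a thin SVD of $\bar A$ with $\bar\sigma_i = \sqrt{\xi_i}$, $\bar u_i = u_{A,i}$, and $\bar v_i = P e_i$.

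Finally I would substitute these data into the filter expansion of $\bar x_k$ and apply $R^{-1}$, using $R^{-1}\bar v_i = Z D_\alpha^{-1/2} P^T (P e_i) = z_i/\sqrt{(D_\alpha)_{ii}}$, and then collapse $\sqrt{\bar\sigma_i^{\,2}(D_\alpha)_{ii}} = \sqrt{\xi_i\,(D_\alpha)_{ii}} = \sigma_i$ by reading off the diagonals of $D_A^T D_A$ and $D_\alpha$ displayed in the proof of \Cref{prop2.1}. This reproduces \cref{filter_solution} and \cref{filter1} term by term. The step I expect to be the main obstacle is the SVD identification for $\bar A$: one must recognize that any Cholesky-type factor of $G$ automatically pairs with the GSVD to produce a genuine orthogonal matrix $P$ with $RZ = P D_\alpha^{1/2}$, rather than a mere invertible factor; once this clean factorization is in hand, the rest is algebraic bookkeeping, and the full column rank hypothesis $r+q=n$ serves precisely to erase the null-space block of $D_A$ so that the identification is a true SVD rather than one padded with spurious zero singular values.
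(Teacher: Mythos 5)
Your proposal is correct and follows essentially the same route as the paper's proof: both reduce $x_k$ to the standard GKB/CGLS iterate for $\bar A = AR^{-1}$, both hinge on the observation that $RZD_{\alpha}^{-1/2}$ is orthogonal (the paper's $R\bar Z$ is exactly your $P$), and both land on the Ritz-polynomial filter factors evaluated at $\xi_i$. The only difference is presentational: the paper invokes the matrix-polynomial form $\bar x_k=(I-\mathcal{R}_k(\bar A^T\bar A))(\bar A^T\bar A)^{-1}\bar A^Tb$ and diagonalizes it, whereas you quote the already-diagonalized filtered-SVD form of the LSQR iterate and identify the SVD of $\bar A$ explicitly.
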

\begin{proof}
	Note from \cref{gsvd} that $\mathrm{rank}(A)=r+q$, which implies $r+q=n$ if $A$ has full column rank. In this case $\sigma_i>0$ for all $1\leq i\leq n$. Following the second proof of \Cref{prop3.1}, $x_k$ is the solution of 
	\begin{align*}
		\min\limits_{x\in\mathrm{span}\{W_k\}}\|Ax-b\|_2
		= \min\limits_{Rx\in R\mathrm{span}\{W_k\}}\|AR^{-1}Rx-b\|_2 
		= \min\limits_{\bar{x}\in\mathrm{span}\{V_k\} \atop x=R^{-1}\bar{x}}\|\bar{A}\bar{x}-b\|_2,
	\end{align*}
	which means that $x_k=R^{-1}\bar{x}_k$ with $\bar{x}_k=\argmin_{\bar{x}\in\mathrm{span}\{V_k\}}\|\bar{A}\bar{x}-b\|_2$. Since $\{v_i\}_{i=1}^{k}$ are the right Lanczos vectors generated by the standard GKB process of $\bar{A}$ with starting $b$, we know $\bar{x}_k$ is the $k$-th CGLS solution applied to the normal equation $\bar{A}^T\bar{A}=\bar{A}^Tb$, where the initial solution is set as $\boldsymbol{0}$ \cite[\S 6.3]{Hansen1998}. Since $\bar{A}=AR^{-1}$ has full column rank, using \cite[Property 2.8]{Van1986rate}, the expression of $\bar{x}_k$ is given by
	\begin{equation}\label{filter2}
		\bar{x}_k = (I-\mathcal{R}_k(\bar{A}^T\bar{A}))(\bar{A}^T\bar{A})^{-1}\bar{A}^Tb ,
	\end{equation}
	where $\mathcal{R}_k$ is the so-called Ritz polynomial 
	\begin{equation*}
		\mathcal{R}_k(\theta) = \prod_{j=1}^{k}\frac{(\theta_{j}^{(k)})^2-\theta}{(\theta_{j}^{(k)})^2} .
	\end{equation*}
	From the proof of \Cref{prop2.1} we have $Z^TGZ=D_{\alpha}$. Let $\bar{Z}=ZD_{\alpha}^{-1/2}$. Then $\bar{Z}$ is a $G$-orthonormal matrix, and $\bar{A}=U_AD_AZ^{-1}R^{-1}=U_A(D_AD_{\alpha}^{-1/2})(R\bar{Z})^{-1}$. Therefore, we have $\bar{A}^T\bar{A}=(R\bar{Z})^{-T}(D_{A}^TD_AD_{\alpha}^{-1})(R\bar{Z})^{-1}$. Using the fact
	\[(R\bar{Z})^{-1}(R\bar{Z})^{-T} = [(R\bar{Z})^{T}(R\bar{Z})]^{-1} = (\bar{Z}^TG\bar{Z})^{-1}=I , \]
	we obtain
	\begin{align*}
		I-\mathcal{R}_k(\bar{A}^T\bar{A})
		= (R\bar{Z})^{-T}(I-\mathcal{R}_k(D_{A}^TD_AD_{\alpha}^{-1}))(R\bar{Z})^{-1} 
		= (R\bar{Z})^{-T}\Lambda(R\bar{Z})^{-1} ,
	\end{align*}
	where $\Lambda = \mathrm{diag}(\{1-\mathcal{R}_k(\xi_i)\}_{i=1}^{n})$. Substituting the expression of $I-\mathcal{R}_k(\bar{A}^T\bar{A})$ into \cref{filter2}, we get
	\begin{align*}
		\bar{x}_k &= (R\bar{Z})^{-T}\Lambda D_{\alpha}^{1/2}(D_{A}^{T}D_A)^{-1}D_{A}^{T}U_{A}^{T}b \\
		&= R\bar{Z}\Lambda D_{\alpha}^{1/2}(D_{A}^{T}D_A)^{-1}D_{A}^{T}U_{A}^{T}b \\
		&= RZD_{\alpha}^{-1/2}\Lambda D_{\alpha}^{1/2}(D_{A}^{T}D_A)^{-1}D_{A}^{T}U_{A}^{T}b \\
		&= RZ\Lambda (D_{A}^{T}D_A)^{-1}D_{A}^{T}U_{A}^{T}b .
	\end{align*}
	Therefore, we finally obtain
	\begin{equation*}
		x_k = R^{-1}\bar{x}_k = \sum_{i=1}^{n}(1-\mathcal{R}_k(\xi_i))\frac{u_{A,i}^{T}b}{\sigma_i}z_i,
	\end{equation*}
	and the filter factors are $f_{i}^{(k)}=1-\mathcal{R}_k(\xi_i)$ for $i=1,\dots,k$.
\end{proof}

This theorem shows that $x_k$ has a filtered GSVD expansion from. If the $k$ Ritz values $\{(\theta_{j}^{(k)})^2\}_{j=1}^{k}$ approximate the first $k$ generalized singular values $\{\xi_i\}_{i=1}^{k}$ in natural order, i.e. $(\theta_{j}^{(k)})^2\approx\xi_i$ for $i=1,\dots,k$, from \cref{filter1} we can justify that $f_{i}^{(k)}\approx 1$ for $i=1,\dots,k$ and $f_{i}^{(k)}$ decreases monotonically to zero for $i=k+1,\dots,n$. This means that $x_k$ mainly contains the first $k$ dominant GSVD components and filters out the others. Therefore, the pGKB\_SPR algorithm exhibits the typical semi-convergence behavior, which means that $\|x_k-x_{\text{true}}\|_2$ first decreases, then as $k$ becomes fairly large, $x_k$ will diverge from $x_{\text{true}}$ since too many noisy components are included. The transition point is called the semi-convergence point, at which we get the best regularized solution.

Finally, we give a connection of the pGKB based regularization method with another common regularization method, which is based on the joint bidiagonalization of $\{A,L\}$. 

\begin{Thm}
	If $\alpha=1$, the solution subspaces generated by the pGKB process of $\{A, M\}$ are the same as those generated by the joint bidiagonalization of $\{A, L\}$.
\end{Thm}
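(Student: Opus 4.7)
The plan is to leverage the second proof of \Cref{prop3.1}, which already recasts the pGKB process as the standard GKB process applied to the preconditioned matrix $\bar{A} = AR^{-1}$, where $R$ is any Cholesky factor of $G = A^TA + \alpha M$. When $\alpha = 1$, we have $G = A^TA + M = A^TA + L^TL$, and this is precisely the matrix whose Cholesky factor underlies the joint bidiagonalization (JBD) procedure of \cite{Kilmer2007Projection}. So I would prove the theorem by identifying the two implicit right preconditioners.

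First, I would briefly recall the JBD construction: form the thin QR factorization
\begin{equation*}
\begin{pmatrix} A \\ L \end{pmatrix} = Q R,
\end{equation*}
with $R\in\mathbb{R}^{n\times n}$ upper triangular (invertibility of $R$ is guaranteed by \cref{rank1}), and partition $Q = \begin{pmatrix} Q_A \\ Q_L \end{pmatrix}$ so that $Q_A = AR^{-1}$. JBD proceeds by applying the standard GKB process to $Q_A$ with starting vector $b$, producing right Lanczos vectors $v_i^{\mathrm{JBD}}$ that span $\mathcal{K}_k(Q_A^T Q_A, Q_A^T b)$. After undoing the substitution $x = R^{-1}\bar{x}$, the solution subspace of JBD in the original variable is $R^{-1}\,\mathrm{span}\{v_i^{\mathrm{JBD}}\}_{i=1}^{k}$.

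Next, I would observe that the QR factor $R$ satisfies $R^T R = A^T A + L^T L = A^T A + M$, which, when $\alpha = 1$, coincides with $G$. Hence $R$ is a valid Cholesky factor of $G$, and $Q_A = AR^{-1}$ is exactly the matrix $\bar{A}$ that appears in the second proof of \Cref{prop3.1}. Consequently the GKB bidiagonalization inside JBD and the GKB bidiagonalization of $\bar{A}$ hidden inside pGKB are the same process, started from the same vector $b$, so $\mathrm{span}\{v_i^{\mathrm{JBD}}\}_{i=1}^k = \mathrm{span}\{v_i\}_{i=1}^k$. Applying $R^{-1}$ and using the identity
\begin{equation*}
R^{-1}\mathcal{K}_k(\bar A^T \bar A,\,\bar A^T b) \;=\; \mathcal{K}_k(G^{-1}A^TA,\,G^{-1}A^Tb) \;=\; \mathrm{span}\{w_i\}_{i=1}^k
\end{equation*}
already established in the second proof of \Cref{prop3.1} then delivers the claimed equality of solution subspaces.

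The only non-routine point is that neither the Cholesky factor of $G$ nor the QR factor $R$ of $[A;L]$ is unique; each is determined only up to multiplication on the left by an orthogonal matrix. I expect this to be the main obstacle but a mild one: two choices $R$ and $\tilde R = SR$ with $S$ orthogonal yield preconditioned matrices $\bar A$ and $\bar A S^T$, whose right Lanczos subspaces differ by the rotation $S^T$, which is exactly canceled when we premultiply by $R^{-1} = \tilde R^{-1} S$. Hence the final subspace $R^{-1}\mathrm{span}\{v_i^{\mathrm{JBD}}\}$ is intrinsic to the pair $(G,b)$ and does not depend on which factorization the two algorithms use internally. Once this invariance is recorded, the chain of identifications above completes the proof.
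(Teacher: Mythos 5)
Your proof is correct, but it travels a different road than the paper's. The paper's own argument is a two-line corollary of \Cref{prop3.2}: when $\alpha=1$ the identity $\sigma_i^2+\rho_i^2=1$ forces $D_{\alpha}=I$, so the pGKB solution subspace collapses to $\mathrm{span}\{Z(D_{A}^{T}D_{A})^{i}D_{A}^{T}U_{A}^{T}b\}_{i=0}^{k-1}$, which is then matched against the known GSVD expression of the JBD subspace from the Kilmer--Hansen reference. You instead work at the level of the implicit preconditioners: the QR factor $R$ of the stacked matrix $\begin{pmatrix} A \\ L\end{pmatrix}$ satisfies $R^TR=A^TA+L^TL=G$ when $\alpha=1$, so the GKB process hidden inside JBD (applied to $Q_A=AR^{-1}$) and the one hidden inside pGKB (applied to $\bar A=AR^{-1}$ per the second proof of \Cref{prop3.1}) are the same process up to the orthogonal ambiguity in the factor, which cancels after premultiplying by $R^{-1}$ --- a point you rightly flag and correctly resolve. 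Your route is coordinate-free (no GSVD needed) and arguably more explanatory, since it exhibits \emph{why} the subspaces coincide: both methods precondition by a square root of the same matrix $G$; the trade-off is that it requires unpacking the JBD construction rather than simply quoting its subspace formula, whereas the paper's route reuses machinery already established in \Cref{prop3.2}. Both arguments are sound and yield the same conclusion.
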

\begin{proof}
	Note that $D_{\alpha}=I$ when $\alpha=1$. By \Cref{prop3.2}, we have
	\[\mathcal{S}_k = \mathrm{span}\{Z(D_{A}^{T}D_{A})^{i}D_{A}^{T}U_{A}^{T}b\}_{i=0}^{k-1} .\]
	By checking the solution subspace given in \cite{Kilmer2007}, we find that $\mathcal{S}_k$ is just the subspace generated by the joint bidiagonalization of $\{A, L\}$.
\end{proof}

The joint bidiagonalization based regularization method is effective when $L$ is available. If $L$ is not known or the square root decomposition of $M$ is costly, the pGKB based regularization method is a very good alternative.

%%-----------------------------------------------------------------------------
\section{Hybrid regularization method based on pGKB}\label{sec5}
Although the DP or L-curve criterion can be used to stop iteration early to avoid semi-convergence, the corresponding solution is  still often over or under-regularized since the relative error is very sensitive near the semi-convergence point. The hybrid regularization method is another type of iterative method that can stabilize the convergence behavior, which usually applies Tikhonov regularization to the projected problem at each iteration; see e.g. \cite{Kilmer2001,Chungnagy2008,Renaut2017}.

Using subspace projection procedure with $\mathcal{S}_k=\mathrm{span}\{W_k\}$, the Tikhonov regularization problem becomes 
\begin{align*}
	\min_{x=W_ky}\{\|Ax-b\|_{2}^2+\lambda x^TMx\} 
	= \min_{y\in\mathbb{R}^{k}}\{\|B_ky-\beta_1e_1\|_{2}^2+\lambda y^T(W_{k}^{T}MW_{k})y\} .
\end{align*}
Note that $W_{k}^{T}MW_{k}\in\mathbb{R}^{k\times k}$. First we compute the square root decomposition $W_{k}^{T}MW_{k}=C_{k}^{T}C_{k}$, which can be done directly by the eigenvalue decomposition of $W_{k}^{T}WZ_{k}$ for small $k$. The hybrid method solves the regularized projected problem
\begin{align}\label{hyb}
	y_{k}^{\mu_k} = \argmin_{y\in\mathbb{R}^{k}}\{\|B_k y-\beta_1e_1\|^2+\mu_k\|C_k y\|^2\},
\end{align}
and let $x_{k}^{\mu_k}=W_ky_{k}^{\mu_k}$ for $k=1,2,\dots$, where $\mu_k$ should be determined at each iteration. Let $\lambda^{\text{opt}}$ and $\mu_{k}^{\text{opt}}$ denote the optimal regularization parameters of the original problem and $k$-th projected problem, respectively. The main idea of the hybrid method is that as $k$ gradually increases, the projected problem approximates the original problem. Then for a large enough $k$, we can hope that $\mu_{k}^{\text{opt}}$ converges to $\lambda^{\text{opt}}$ and thus the corresponding regularized solution will also converge \cite{Kilmer2001,Renaut2017}.

In order to determine $\mu_{k}^{\text{opt}}$ at each iteration, we adapt the weighted generalized cross validation method (WGCV), which was first proposed in \cite{Chungnagy2008} for standard-form Tikhonov regularization. From \cref{hyb} we have $y_{k}^{\mu}=(B_{k}^{T}B_{k}+\mu C_{k}^{T}C_{k})^{-1}B_{k}^{T}\beta_{1}e_{1}:=B_{k,\mu}^{\dag}\beta_{1}e_{1}$. At the $k$-th step, the WGCV method finds the minimizer of the following function with the weight parameter $\omega_k$:
\begin{equation}\label{wgcv}
	G(\omega_k,\mu) = \frac{\|B_{k}y_{k}^{\mu}-\beta_{1}e_{1}\|_{2}^{2}}{\left(\mathrm{trace}(I-\omega_{k}B_{k}B_{k,\mu}^{\dag})\right)^2} ,
\end{equation}
and use this minimizer as $\mu_k$. We remark that if $w_k=1$ for all $k$, it is the standard GCV method. The weight parameter $\omega_k$ is initialized and automatically updated following the same strategy in \cite{Chungnagy2008}. By writing the analytical expression of $G(\omega_k,\mu)$ using the GSVD of $\{B_k, C_k\}$, we can find its minimizer using the MATLAB build-in function \texttt{fminbnd.m}. This approach is at cost of $O(k^3)$ flops, dominated by computing the GSVD of $\{B_k, C_k\}$. In the ideal situation, as $k$ increases, $\mu_k$ will converge and $G(1,\mu_k)$ will converge to a fixed value. We terminate the iteration at $k+s_{1}$ with $k$ the first iteration satisfying
\begin{equation}\label{tol1}
\Big| \frac{G(1,\mu_{i+1})-G(1,\mu_{i})}{G(1,\mu_{1})} \Big| < \mathtt{tol1}, \ \
i = k,\dots, k+s_{1} ,
\end{equation}
where $s_1+1$ is the length of window to avoid bumps. We set $s_1=4$ and $\mathtt{tol1}=10^{-6}$ by default.

If we have a good estimate of $\|e\|_2$, there is another method that can update $\mu_k$ step by step quickly based on DP. This method is called ``secant update" (SU), which was first proposed for the Arnoldi-Tikhonov hybrid method \cite{Gazzola2014automatic}. Here we show how this method is adapted to update $\mu_k$ for the pGKB based hybrid method. A heuristic derivation is as follows. At each iteration we define the function
\begin{equation}\label{psi}
	\psi_{k}(\mu)=\|Ax_{k}^{\mu}-b\|_2=\|B_{k}y_{k}^{\mu}-\beta_{1}e_{1}\|_2 .
\end{equation}
In order to estimate $\mu_k$ by DP, we consider to determine the proper $k$ and $\mu_k$ simultaneously by solving the nonlinear equation $\psi_{k}(\mu)=\tau\|e\|_2$ with a fixed $\tau>1$ slightly. We remark that this equation have a solution only when $k$ is sufficiently large. We use the following secant method to solve the above equation.
% \begin{figure}[h]
% 	\centering
% 	\includegraphics[height=3.5cm,width=5.0cm]{figs/draw.png}
% 	\caption{A geometric interpretation of the zero root finder by secant method}
% 	\label{fig1}
% \end{figure}
Starting from an initial value $\mu_0$, suppose we already have $\mu_{k-1}$ at the $(k-1)$-th iteration. Notice that $\psi_{k}(\mu)$ monotonically increases with respect to $\mu$ \cite{lewis2009arnoldi}. It can be approximated by the linear function
\begin{equation*}
f(\mu) = \psi_{k}(0) + \frac{\psi_{k}(\mu_{k-1})-\psi_{k}(0)}{\mu_{k-1}}\mu ,
\end{equation*}
which interpolates $\psi_{k}(\mu)$ at $0$ and $\mu_{k-1}$. To update $\mu_{k}$ for the next step, we replace $\psi_k(\mu)$ by $f(\mu)$ and solve $f(\mu)=\tau\|e\|_2$, which leads to 
\begin{equation*}
\mu_{k}=\frac{\tau\|e\|_2-\psi_{k}(0)}{\psi_{k}(\mu_{k-1})-\psi_{k}(0)}\mu_{k-1} .
\end{equation*}
This update formula may suffer from instability for small $k$, since it holds that $\psi_{k}(0)>\tau\|e\|_2$. Therefore, we finally use 
\begin{equation}\label{5.4}
\mu_{k}=\Big|\frac{\tau\|e\|_2-\psi_{k}(0)}{\psi_{k}(\mu_{k-1})-\psi_{k}(0)}\Big|\mu_{k-1}  
\end{equation}
as the practical formula to update $\mu_k$. We set $\lambda_{0}=1.0$ by default. To avoid extra computations for $\psi_{k}(\mu_k)$, we use $\mu_{k-1}$ as the estimated regularized parameter for the $k$-th iteration. Numerically we find that \eqref{5.4} is very stable in the sense that when $k$ is sufficient large, both the regularization parameter $\mu_{k}$ and residual norm $\psi_{k}(\mu_{k-1})$ tend to plateau.

Since $\psi_{k}(0)=\|Ax_{k}-b\|_2=\bar{\phi}_{k+1}$, which is the residual norm of the $k$-th pGKB\_SPR solution, it can be updated efficiently by \Cref{alg2} (without computing $p_k$ and $x_k$). To terminate the iteration, we choose $k+s_{2}$ as the stopping iteration with $k$ the first iteration satisfying
\begin{equation}\label{tol2}
\psi_{k}(0) \leq \tau\|e\|_2 \ \ \mathrm{and} \ \
\Big|\frac{\psi_{i+1}(\mu_{i})-\psi_{i}(\mu_{i-1})}{\psi_{i}(\mu_{i-1})} \Big|\leq \mathtt{tol2} ,  \ \
i = k,\dots, k+s_{2} ,
\end{equation}
where $s_2+1$ is the length of window to avoid bumps. We set $s_2=4$ and $\mathtt{tol2}=0.001$ by default.

To summarize, we show pseudocode of the pGKB based hybrid regularization (pGKB\_HR) algorithm using WGCV or SU in \Cref{alg4}.
\begin{algorithm}[htb]
	\caption{pGKB based hybrid regularization (pGKB\_HR)}\label{alg4}
	\begin{algorithmic}[1]
		\Require $A$, $b$, $M$, $\alpha>0$, $\omega_1$ or $\mu_0$  \Comment{require $\tau\|e\|_2$ for SU}
		\For{$k=1,2,\ldots,$}
		\State Compute $u_k$, $w_k$, $\alpha_k$, $\beta_k$ by \texttt{pGKB} 
		\State Compute decomposition $W_{k}^{T}MW_{k}=C_{k}^{T}C_{k}$ to get $C_k$
		\If{`hybrid = WGCV'} \Comment{WGCV}
		\State Compute the GSVD of $\{B_k, C_k\}$
		\State Determine $\mu_k$ by minimizing \cref{wgcv}
		\State Compute $y_{k}^{\mu_{k}}$ by solving \cref{hyb} 
		\State Update $\omega_k$ following \cite{Chungnagy2008}
		\ElsIf{`hybrid = SU'}  \Comment{SU}
		\State Compute $y_{k}^{\mu_{k-1}}$ by solving \cref{hyb} 
		\State Compute the residual norm $\psi_{k}(\mu_{k-1})$ by \cref{psi}
		\State Compute $\bar{\phi}_{k+1}$ by \texttt{updating procedure}
		\State Update $\mu_k$ by \cref{5.4} 
		\EndIf
		\State Terminate iteration by \cref{tol1} or \cref{tol2} at $k_2$ 
		\EndFor
		\State Compute $x_{k_2}^{\mu_{k_2}}=W_{k_2}y_{k_2}^{\mu_{k_{2}}}$
		\Ensure Final regularized solution $x_{k_2}$  
	\end{algorithmic}
\end{algorithm}

%%-------------------------------------------------------------------------------------------
\section{Experimental results}\label{sec6}
We use some numerical examples to show effectiveness and performance of the proposed algorithms, including pGKB\_SPR with DP and L-curve as early stopping criteria and pGKB\_HR with WGCV and SU for determining $\mu_k$. All experiments are performed with MATLAB R2019b. The codes are available at \url{https://github.com/Machealb/InverProb_IterSolver}, where some codes in the packages of \cite{Hansen2007,Gazzola2019} are exploited. We compare accuracy of the regularized solutions and show convergence behaviors by using the relative reconstruction error
\begin{equation}\label{RE}
RE(k) = \frac{\|x_k-x_{\text{true}}\|}{\|x_{\text{true}}\|} \ \ \  \text{or} \ \ \
RE(k) = \frac{\|x_{k}^{\mu_k}-x_{\text{true}}\|}{\|x_{\text{true}}\|}
\end{equation}
to plot the convergence curve of each algorithm.

%----------------------------------------------------------
\subsection{Small-scale inverse problems}
For the first test problem, we choose {\sf deriv2} from \cite{Hansen2007} by setting $m=n=2000$, which is a discretization of the first kind Fredholm integral equation 
\begin{equation}\label{deriv}
	b(s) = \int_{0}^{1}K(s,t)x(t)\mathrm{d}t, \ \ \ 
	K(s, t) = \begin{cases}
		s(t-1), \ \ s<t , \\
		t(s-1), \ \ s\geq t,
	  \end{cases}
\end{equation}
where $x(t)=t$ and $(s, t) \in [0, 1]^2$. For the second test problem, we use the Gaussian convolution of a 1D piecewise constant signal
\begin{equation}\label{gauss}
	b(s) = \int_{-\infty}^{+\infty}K(s-t)x(t)\mathrm{d}t, \ \ \ 
	K(s-t) = \frac{1}{\sqrt{2\pi}\sigma}\exp\left( -\frac{(s-t)^2}{2\sigma^2} \right),
\end{equation}
where $K$ is the Gaussian kernel with $\sigma=10$, and the discretized signal $x(t)$ on 800 uniform grids over $[0,1]$ is shown in \Cref{fig1}. The Gaussian kernel $K$ is discretized correspondingly with zero boundary condition such that $A\in\mathbb{R}^{800\times 800}$ is a Toeplitz matrix. This problem is named as {\sf gauss1d}. For each problem, we add a Gaussian white noise with noise level $\varepsilon = \|e\|_2/\|b_{\text{true}}\|_2$ to $b_\text{true}$ and form $b=b_{\mathrm{true}}+e$, where we set $\varepsilon=5\times 10^{-4}$ and $\varepsilon=5\times 10^{-3}$ for {\sf deriv2} and {\sf gauss1d}, respectively. The true solutions and noisy observed data for these two problems are shown in \Cref{fig1}.

\begin{figure}[htbp]
	\centering
	\subfloat 
	{\label{fig:1a}\includegraphics[width=0.35\textwidth]{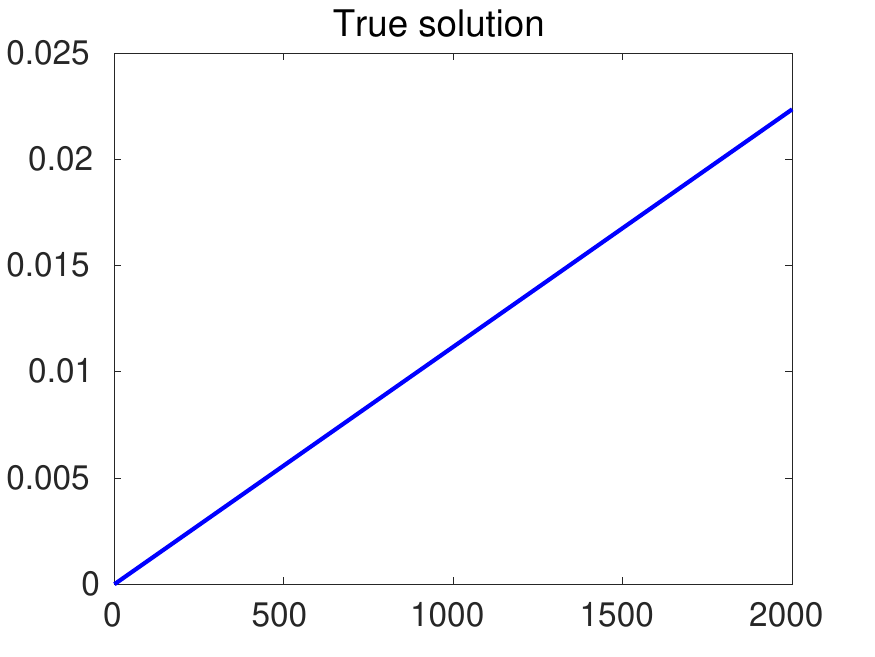}}
	\subfloat{\label{fig:1b}\includegraphics[width=0.35\textwidth]{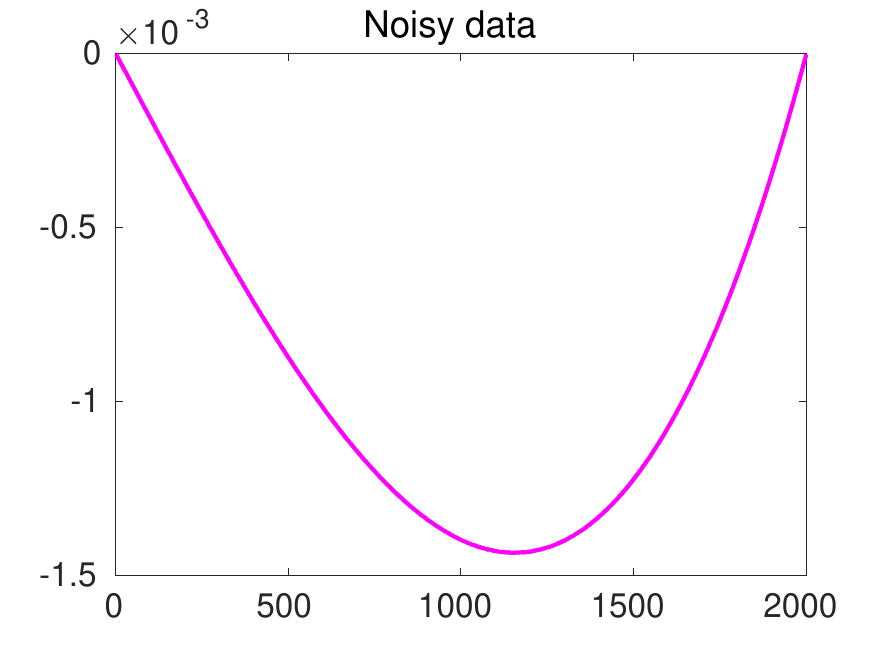}}
	\vspace{-3mm}
	\subfloat 
	{\label{fig:1c}\includegraphics[width=0.35\textwidth]{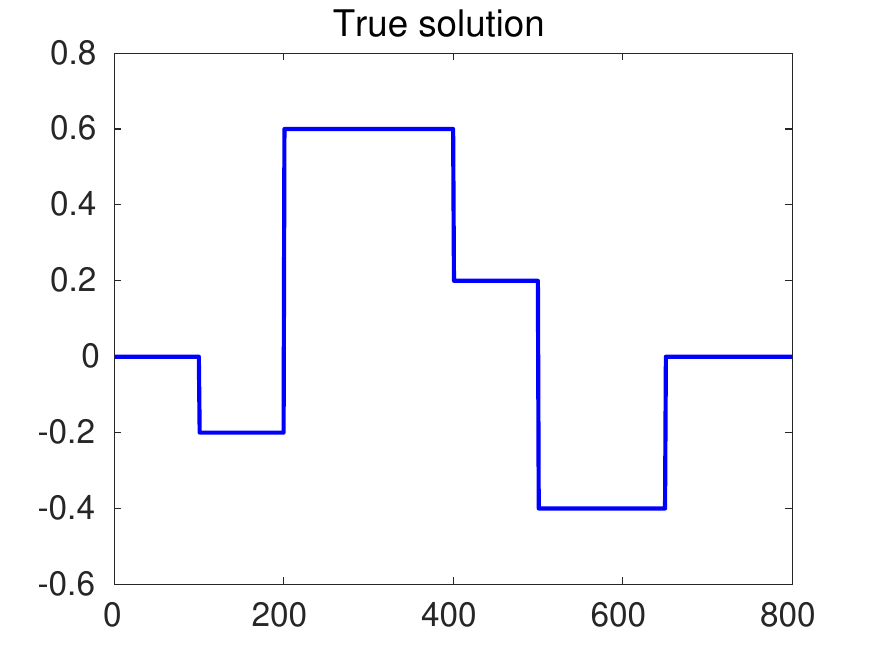}}
	\subfloat{\label{fig:1d}\includegraphics[width=0.35\textwidth]{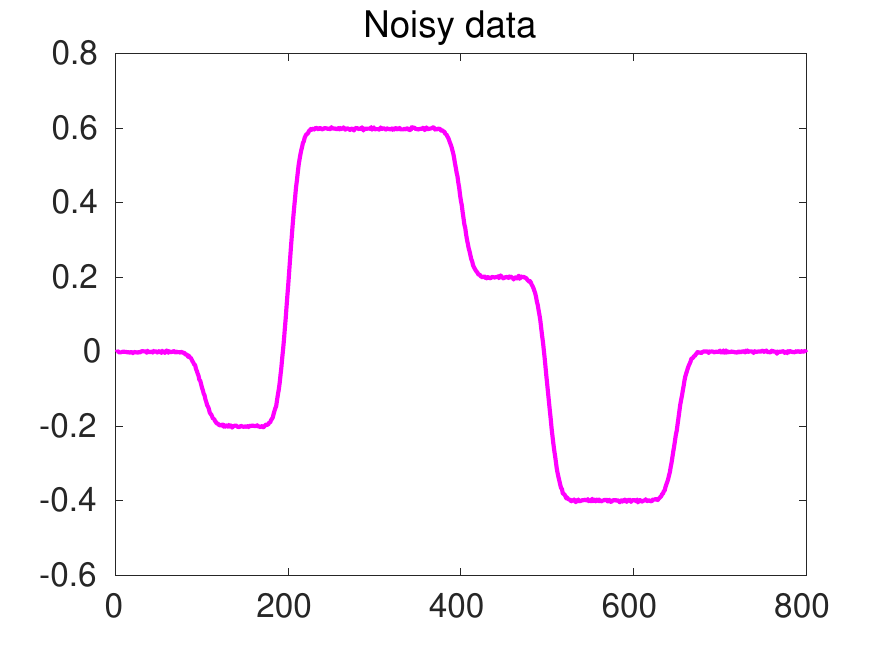}}
	\caption{Illustration of the true solution and noisy observed data. Top: true and observed functions for {\sf deriv2}. Bottom: true and noisy deblurred signals for {\sf gauss1d}.}
	\label{fig1}
\end{figure}

For {\sf deriv2}, we set the regularization matrix $L\in\mathbb{R}^{(n-1)\times n}$ as the scaled discretization of 1D differential operator, which is a bidiagonal matrix with one more row than columns and values $-1$ and $1$ on the subdiagonal and diagonal parts, respectively. Then we form $M=L^TL$ to get $M$. Since $L$ is available, we can compare the pGKB\_SPR with JBD based subspace regularization (JBD\_SPR) algorithm.

\begin{figure}[htbp]
	\centering
	\subfloat[$\tau=0$]
	{\label{fig:2a}\includegraphics[width=0.42\textwidth]{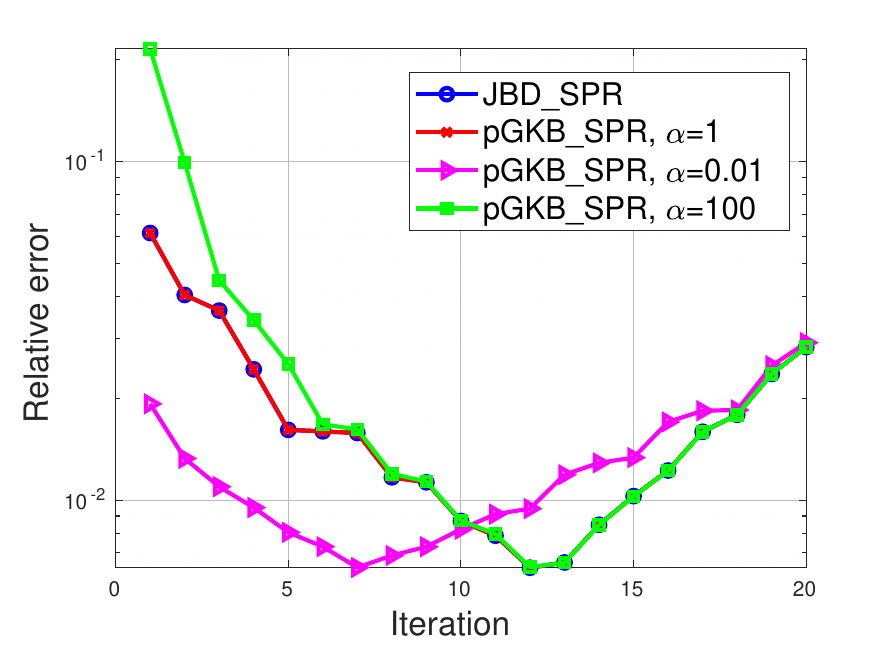}}
	\subfloat[$\alpha=10$]
	{\label{fig:2b}\includegraphics[width=0.42\textwidth]{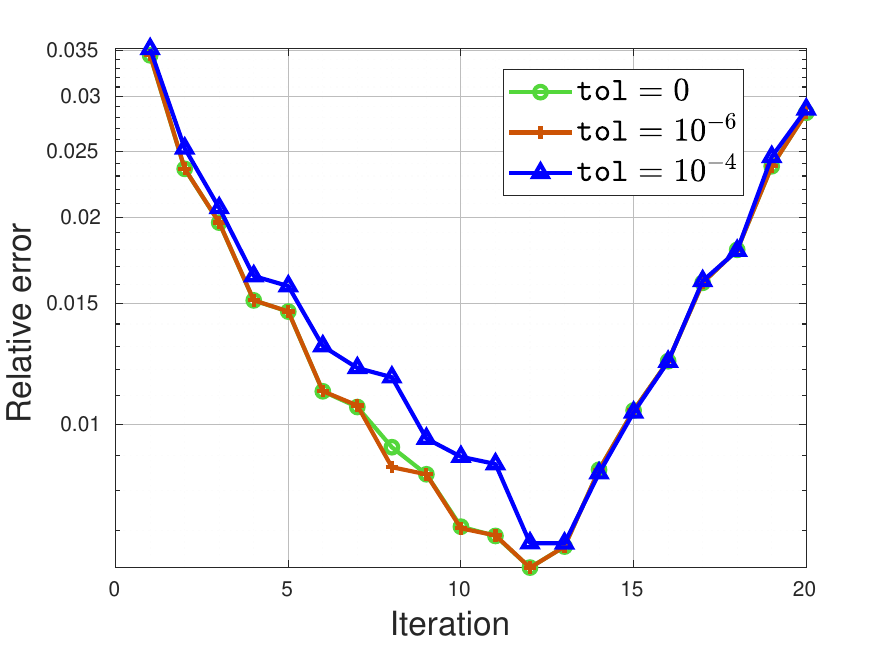}}
	\caption{Comparisons of semi-convergence curves between JBD and pGKB based subspace projection regularization algorithms for {\sf deriv2}. (a) Comparison for different $\alpha$ where inner iterations are computed accurately. (b) Comparison for different solution accuracy of inner iterations.}
	\label{fig2}
\end{figure}

First we illustrate the regularization effect of pGKB\_SPR by plotting its semi-convergence curve and compare it with JBD\_SPR. In this experiment, the inner iteration is computed accurately using matrix inversion, and the values of $\alpha$ are set as $1$, $0.001$ and $100$. From \Cref{fig:2a}, we find that pGKB\_SPR exhibit typical semi-convergence behavior, and the relative errors at the semi-convergence point for the three different $\alpha$ are the same as that of JBD\_SPR. This confirm that the pGKB based regularization method has good regularization effect. To illustrate the impact of computing accuracy of inner iteration on the accuracy of regularized solution, for pGKB\_SPR with $\alpha=10$, we use the MATLAB function \texttt{pcg.m} to solve $Gs=A^Tu_i$ at each outer iteration with stopping tolerance $\texttt{tol}=10^{-6}, 10^{-4}$, and plot the semi-convergence curves in \cref{fig:2b}. We remark that $\texttt{tol}=0$ means that the inner iteration computed accurately using matrix inversion. We find that for $\texttt{tol}=10^{-6}$ the best regularized solution have the same relative error at that for $\texttt{tol}=0$, and the two curves almost coincide for many steps even after semi-convergence. For a more lower computing accuracy with $\texttt{tol}=10^{-4}$, there is a loss of accuracy for the regularized solution. The required solution accuracy of inner iteration impacts on the accuracy of the algorithm, but a theoretical analysis of it is complicated. This will be a part of our forthcoming work.

\begin{figure}
	\centering
	\subfloat[Convergence history]{
		\begin{minipage}[b]{.5\linewidth}
			\includegraphics[width=2.5in]{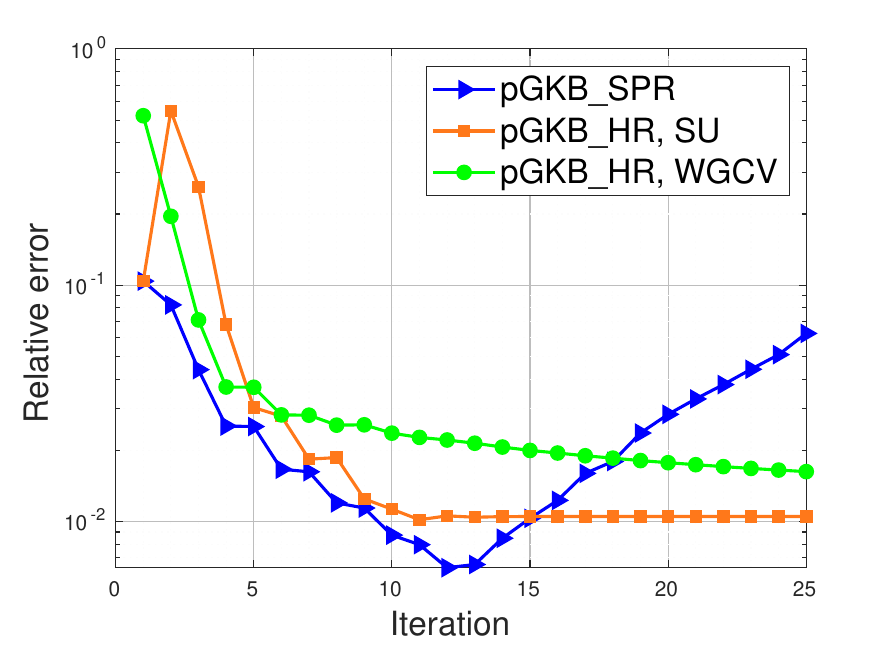}
		\end{minipage}
	} % 不空行表示不换行
	%\quad
	\hspace{-10mm}
	\subfloat[Reconstructed solution]{
		\begin{minipage}[b]{.5\linewidth}
			\centering
			\includegraphics[width=1.33in]{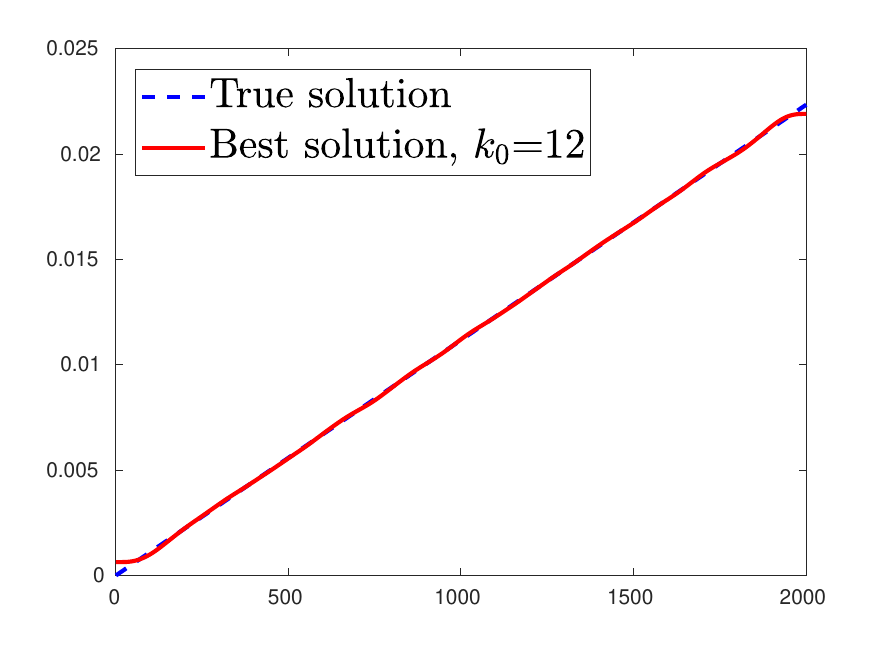}\hspace{-4mm}
			\includegraphics[width=1.32in]{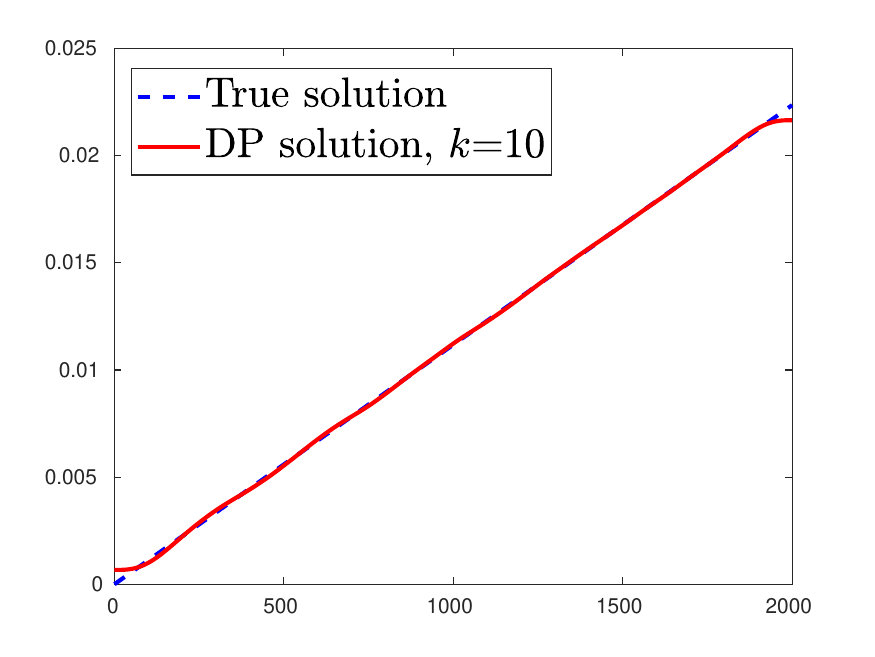}%\vspace{0.75mm}
			\vspace{-1.8mm}
			\includegraphics[width=1.32in]{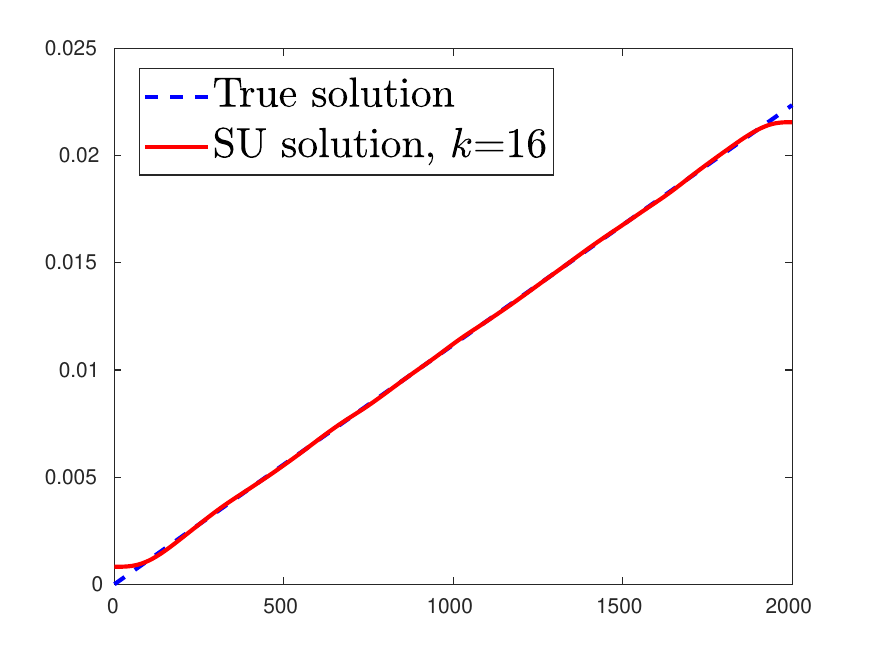}\hspace{-4mm}
			\includegraphics[width=1.33in]{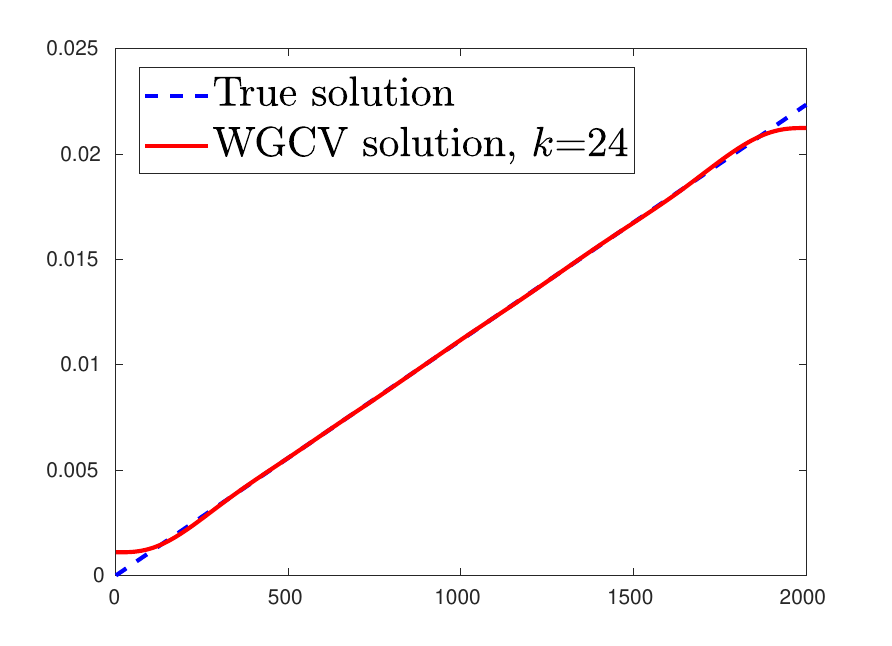}
		\end{minipage}
	}
	\caption{The relative error curves of regularized solutions computed by the pGKB based algorithms and corresponding reconstructed solutions for {\sf deriv2}.}
	\label{fig3}
\end{figure}

Then we illustrate the convergence behavior of pGKB\_SPR and pGKB\_HR, where DP, L-curve (LC) are used to stop iteration early and WGCV, SU are used to determine $\mu_k$. In this experiment, we set $\alpha=10$ and $\texttt{tol}=10^{-6}$. The convergence history curves and the corresponding reconstruct solutions are shown in \Cref{fig3}. Note that the reconstructed solution for pGKB\_SPR with LC are omitted since it is similar as that for pGKB\_SPR with DP. The relative errors of the final regularized solutions and the corresponding iteration number are shown in \Cref{tab1}. Although both DP and LC under-estimate the semi-convergence point $k_0$, the estimated early stopping iterations do not deviate far from $k_0$, and the reconstructed solutions approximate well to the true solution. For pGKB\_HR with WGCV and SU, the relative error eventually decay flat as $k$ becomes sufficiently large, and slight higher than the best relative error of pGKB\_SPR at $k_0$. Although it is not shown here, in this experiment we find that both WGCV and SU over-estimate regularization parameters, and the estimate by WGCV is higher, thereby it computes a more over-smoothed solution.

\begin{figure}[htbp]
	\centering
	\subfloat[$\tau=0$]
	{\label{fig:4a}\includegraphics[width=0.42\textwidth]{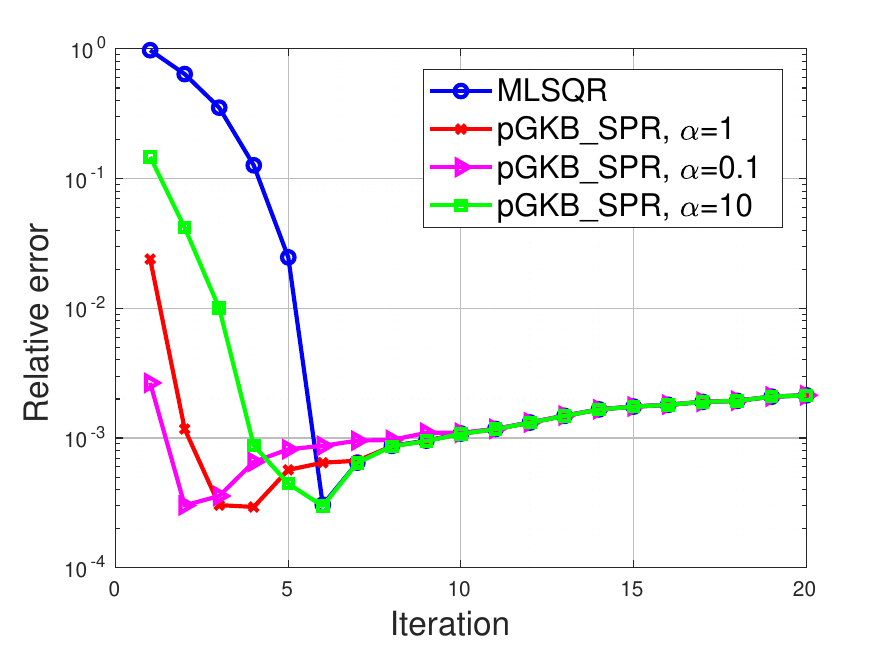}}
	\subfloat[$\alpha=1$]
	{\label{fig:4b}\includegraphics[width=0.42\textwidth]{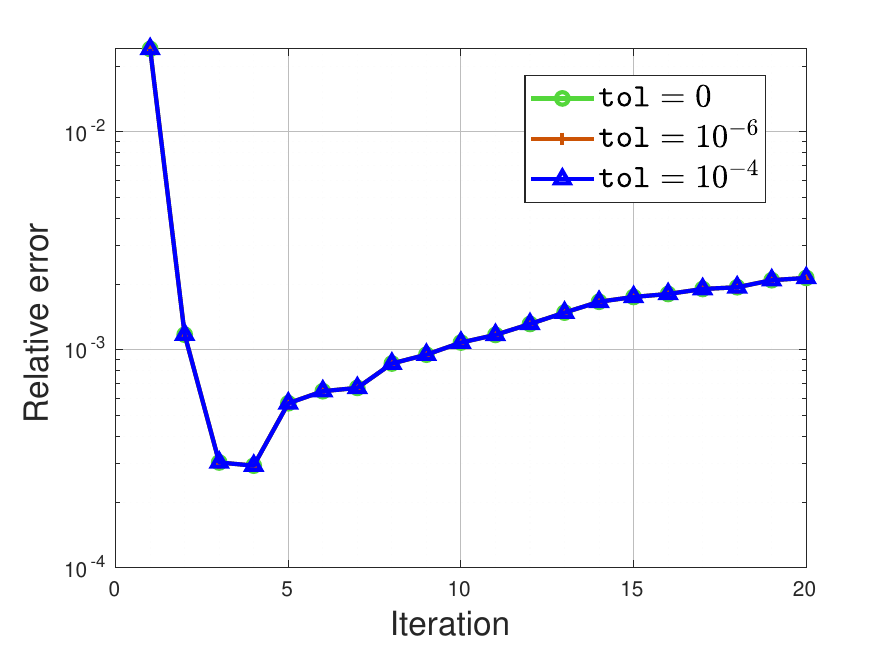}}
	\caption{Comparisons of semi-convergence curves between MLSQR and pGKB based subspace projection regularization algorithms for {\sf gauss1d}. (a) Comparison for different $\alpha$ where inner iterations are computed accurately. (b) Comparison for different solution accuracy of inner iterations.}
	\label{fig4}
\end{figure}

For {\sf gauss1d} with a piecewise constant $x_{\text{true}}$, the total variation regularization TV$(x)=\int_{\mathbb{R}}|\nabla x|\mathrm{d}t$ is the most suitable. Since TV$(x)$ is nonlinear, we construct $M$ to approximate TV$(x)$ at $x_{\mathrm{true}}$ by $x^TMx$ using the procedure in the LDFP method. We remark that this $M$ is only used for experimental purpose, while in practice we can not construct such a good $M$ since $x_{\mathrm{true}}$ in unknown. The LDFP method first replaces TV$(x)$ by TV$_{\beta}(x)=\int_{\mathbb{R}}\sqrt{|\nabla x|^2+\beta^2}\mathrm{d}t$ with $\beta$ a small positive value, and then linearize $\mathrm{TV}_{\beta}$ at $x$ using its gradient $L(x)$:
\begin{equation}
	L(x)y := -\nabla\cdot\left(\frac{1}{\sqrt{|\nabla x|^2+\beta^2}}\nabla y \right) .
\end{equation}
We choose $\beta=10^{-6}$ and construct $M$ by discretizing $L(x)$ at $x_{\text{true}}$ using the finite difference procedure. For details, see \cite{Vogel1996iterative,Chan1999convergence}. 

In this experiment, we first compare the pGKB method with MLSQR method proposed in \cite{Arridge2014}. Since the rank of $M$ is $n-1$, we let $M_{\delta}=M+\delta I$ with $\delta=10^{-6}$ and apply MLSQR to $\{A, M_{\delta}\}$. The semi-convergence curves are shown in \Cref{fig:4a}, where the inner iteration is computed accurately using matrix inversion, and the values of $\alpha$ are set as $1$, $0.01$ and $10$. We can find that pGKB\_SPR exhibit typical semi-convergence behavior, and the relative errors at the semi-convergence point for the three different $\alpha$ are almost the same as that of MLSQR. The impact of computing accuracy of inner iteration on the accuracy of regularized solution for pGKB\_SPR are shown in \Cref{fig:4b}, where we use $\alpha=1$. All the three curves almost coincide for many steps even after semi-convergence. This indicates that a much relaxed computing accuracy such as $\texttt{tol}=10^{-4}$ for inner iterations does not reduce accuracy of the final regularized solution.

\begin{figure}
	\centering
	\subfloat[Convergence history]{
		\begin{minipage}[b]{.5\linewidth}
			\includegraphics[width=2.5in]{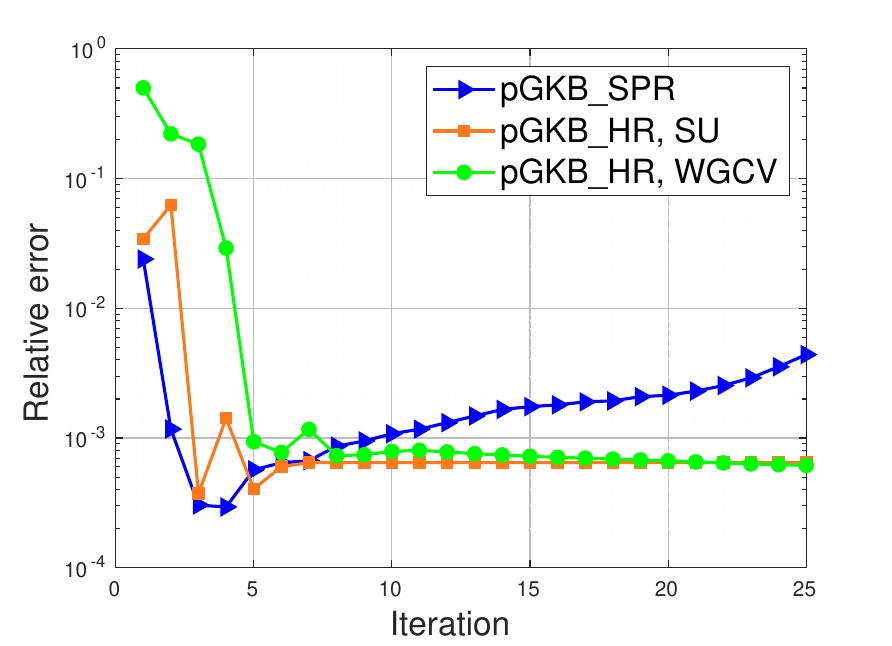}
		\end{minipage}
	} 
	%\quad
	\hspace{-10mm}
	\subfloat[Reconstructed solution]{
		\begin{minipage}[b]{.5\linewidth}
			\centering
			\includegraphics[width=1.33in]{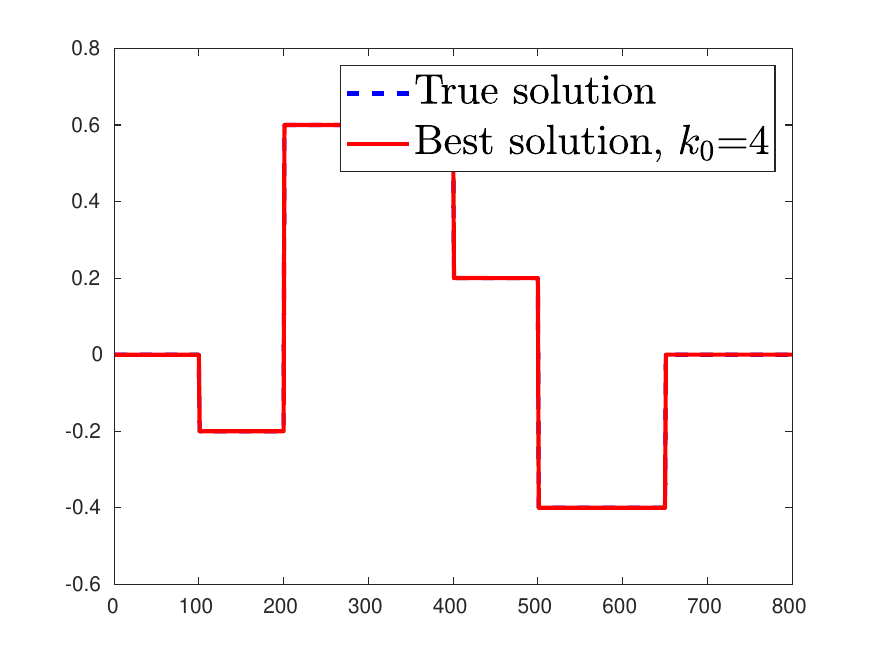}\hspace{-4mm}
			\includegraphics[width=1.33in]{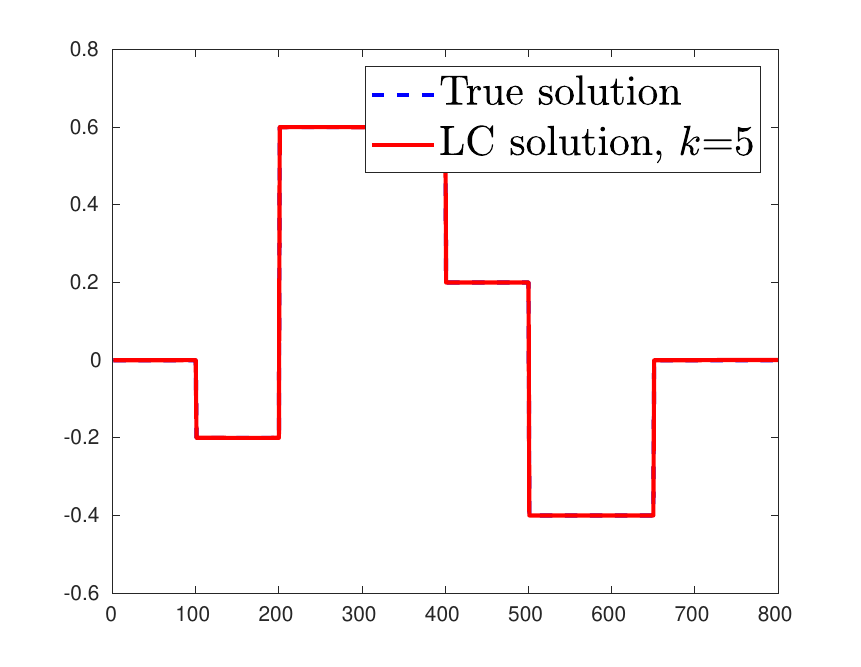}%\vspace{0.75mm}
			\vspace{-1.8mm}
			\includegraphics[width=1.33in]{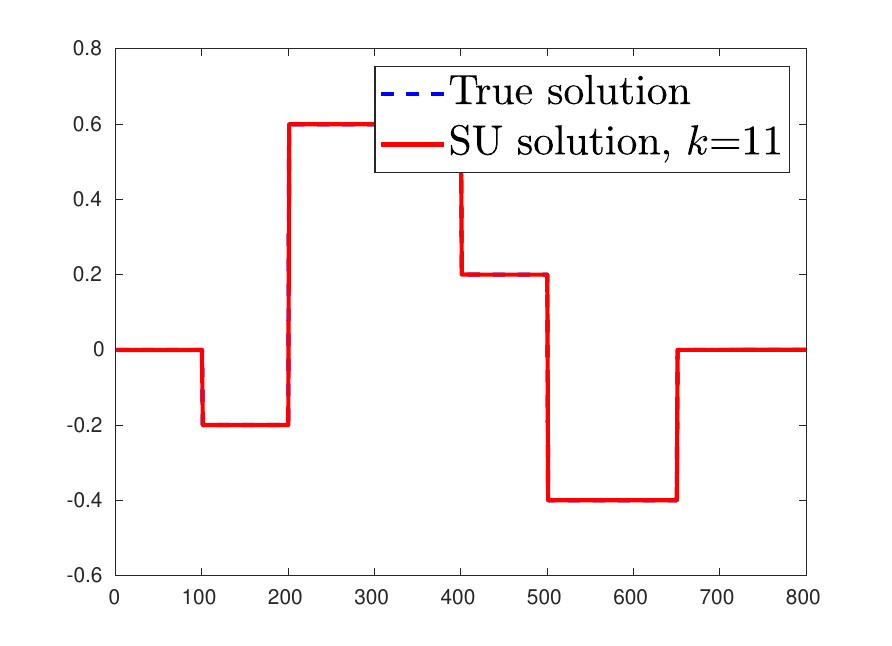}\hspace{-4mm}
			\includegraphics[width=1.33in]{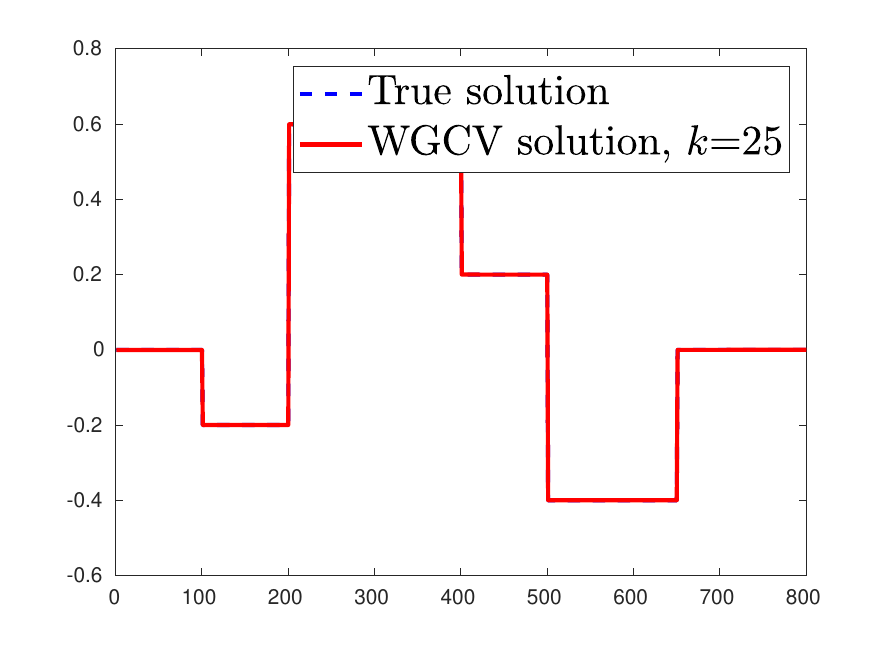}
		\end{minipage}
	}
	\caption{The relative error curves of regularized solutions computed by the pGKB based algorithms and corresponding reconstructed solutions for {\sf gauss1d}.}
	\label{fig5}
\end{figure}

\begin{table}[htp]
	\centering
	\caption{Relative errors of the final regularized solutions and corresponding early stopping iterations (in parentheses), where  $\varepsilon=5\times 10^{-4}$ for {\sf deriv2} and $\varepsilon=5\times 10^{-3}$ for {\sf gauss1d}.}
	\scalebox{0.73}{
	\begin{tabular}{llllll}
		\toprule
		Problem     & Best  & DP & LC  & SU & WGCV          \\
		\midrule
		{\sf deriv2} & 0.0064 (12) & 0.0087 (10) & 0.0120 (8) & 0.0105 (16) & 0.0165 (24)  \\
		{\sf gauss1d} & $2.2395\times 10^{-4}$ (4) & $3.0393\times 10^{-4}$ (3) & $5.6806\times 10^{-4}$ (5) & $6.4605\times 10^{-4}$ (11) & $6.1523\times 10^{-4}$ (25)  \\
		\bottomrule
	\end{tabular}}
	\label{tab1}
\end{table}

The convergence behavior of pGKB\_SPR and pGKB\_HR algorithms and the corresponding reconstructed 1D signals are shown in \Cref{fig5}, where $\alpha=1$ and $\texttt{tol}=10^{-6}$. The relative errors of the final regularized solutions and the corresponding iteration numbers are shown in \Cref{tab1}. For pGKB\_HR with WGCV and SU, the relative errors eventually decay flat as $k$ becomes sufficiently large and they are very close. Note that the iteration of WGCV does not satisfied the stopping criterion \cref{tol1} when the maximum iteration number $25$ are reached, since the default value for $\mathtt{tol1}$ is too small in this case. All the reconstructed signals approximate well to the original piecewise constant signal. This is due to the well-constructed regularizer matrix $M$ based on TV regularization and the incorporation of prior information about $x_{\text{true}}$ in the solution subspace $\mathrm{span}\{W_k\}$.

%----------------------------------------------------------
\subsection{Large-scale inverse problems}
The two large-scale test problems are chosen from \cite{Gazzola2019}. The first test problem is {\sf PRblurdefocus}, which models an image blurring problem caused by a spatially invariant out-of-focus blur. We use the true image `Hubble Space Telescope' with $256\times256$ pixels, and set the blur level as \texttt{`mild'} and use zero boundary condition to get $A\in\mathbb{R}^{256^2\times256^2}$. The second test problem is {\sf PRdiffusion}, which is a 2D inverse diffusion problem $\partial_t u=\nabla^{2}u$ in the domain $[0,T]\times [0,1]^2$ with Neumann boundary condition, and we aim to reconstruct the initial function $u_0$ from $u_{T}$. We set $T=0.005$ with $100$ time steps, and discretize the PDE on a $128\times128$ uniform finite element mesh to get the forward operator $A\in\mathbb{R}^{128^2\times128^2}$, which maps $x_{\text{true}}$ (the discretized $u_0$) to $b_{\text{true}}$ (the discretized $u_T$). The noise levels are set as $\varepsilon=0.002$ and $\varepsilon=0.001$ for {\sf PRblurdefocus} and {\sf PRdiffusion}, respectively. The true solutions and noisy observed data are shown in \Cref{fig6}.

\begin{figure}[htbp]
	\centering
	\subfloat
	{\label{fig:6a}\includegraphics[width=0.3\textwidth]{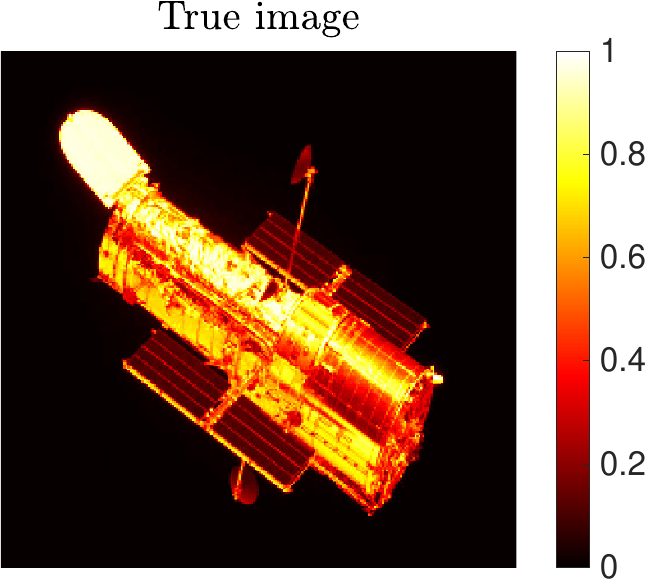}}\hspace{12mm}
	\subfloat
	{\label{fig:6b}\includegraphics[width=0.3\textwidth]{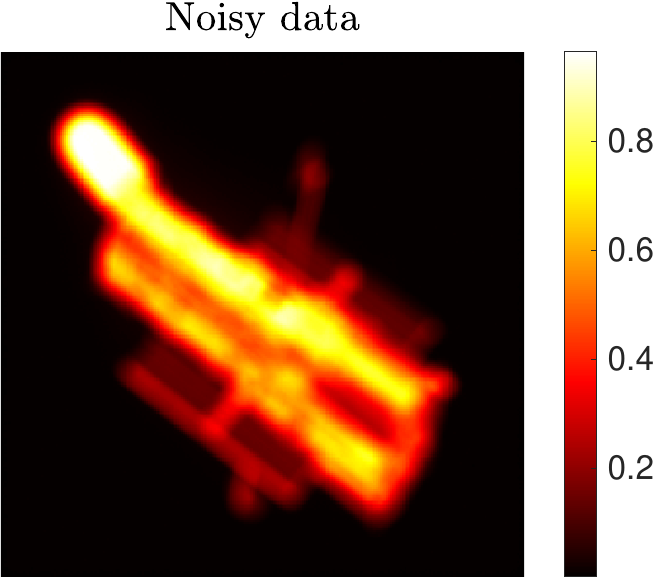}}
	\vspace{-3mm} %\quad
	\subfloat
	{\label{fig:6c}\includegraphics[width=0.4\textwidth]{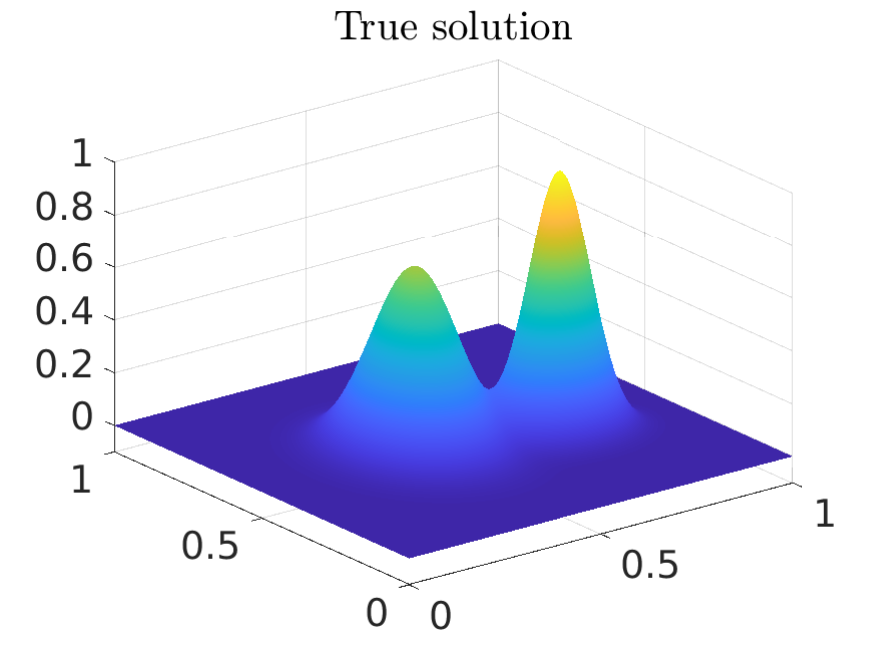}}
	\subfloat
	{\label{fig:6d}\includegraphics[width=0.4\textwidth]{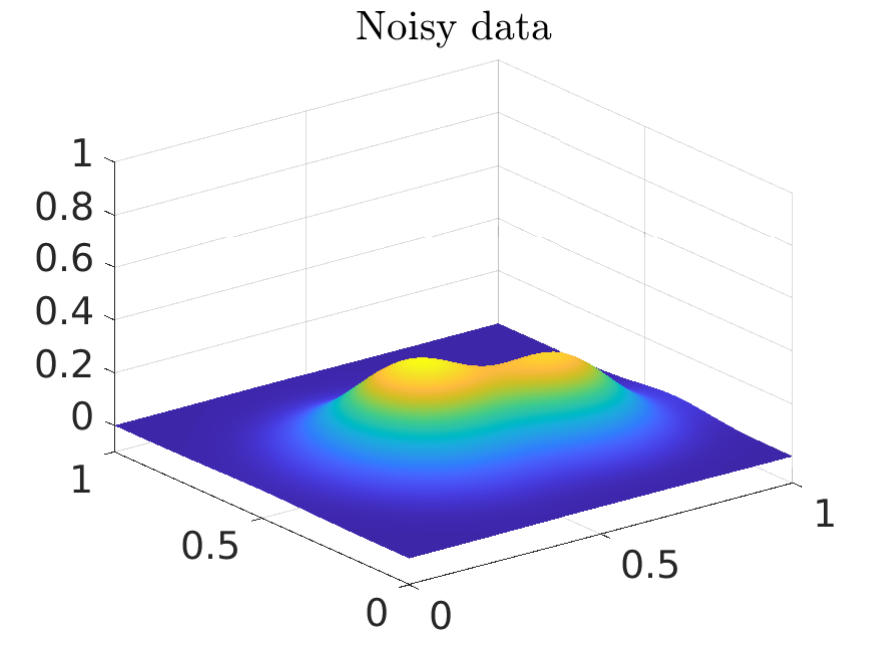}}
	\caption{Illustration of the true solution and noisy observed data. Top: true and noisy blurred image for {\sf PRblurdefocus}. Bottom: $x_{\text{true}}$ corresponding to $u_0$ and $b$ corresponding to noisy $u_T$ for {\sf PRdiffusion}.}
	\label{fig6}
\end{figure}

\begin{figure}
	\centering
	\subfloat[Convergence history]{\label{fig:7a}
		\begin{minipage}[b]{.5\linewidth}
			\includegraphics[width=2.5in]{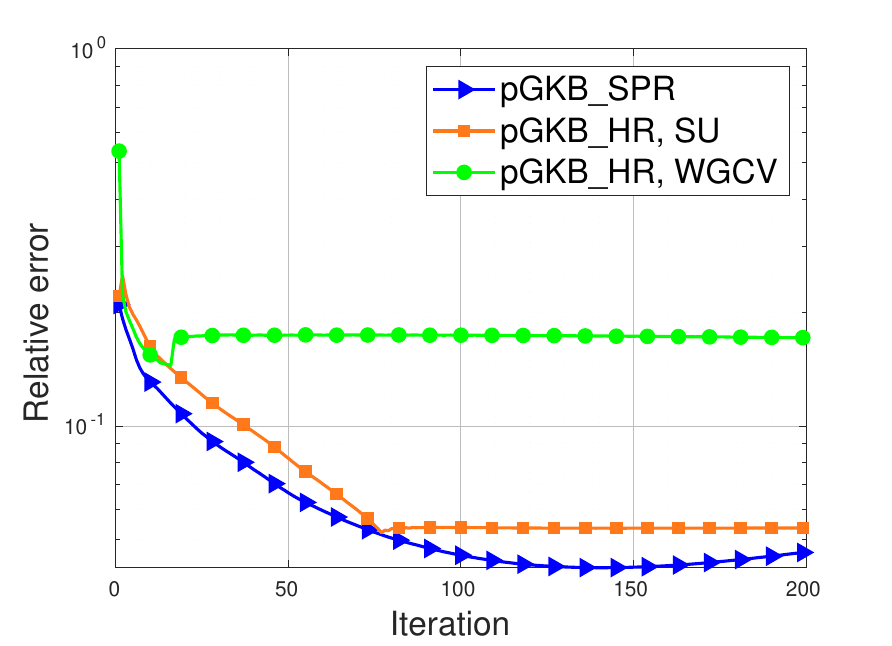}
		\end{minipage}
	} 
	%\quad
	\hspace{-8mm}
	\subfloat[Reconstructed solution]{\label{fig:7b}
		\begin{minipage}[b]{.5\linewidth}
			\centering
			\includegraphics[width=1.22in]{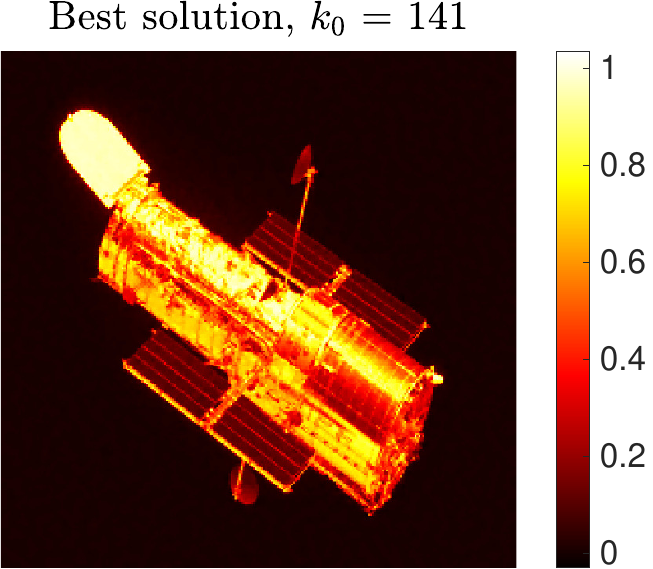}
			\includegraphics[width=1.22in]{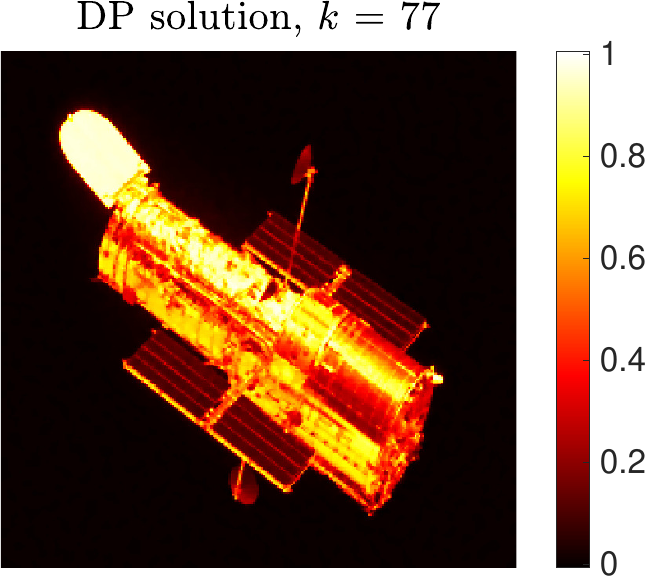}%\vspace{0.75mm}
			\quad
			\includegraphics[width=1.22in]{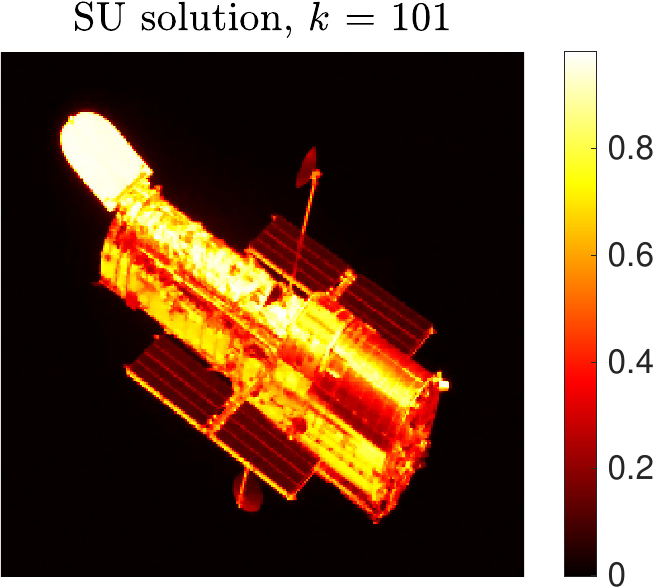}
			\includegraphics[width=1.22in]{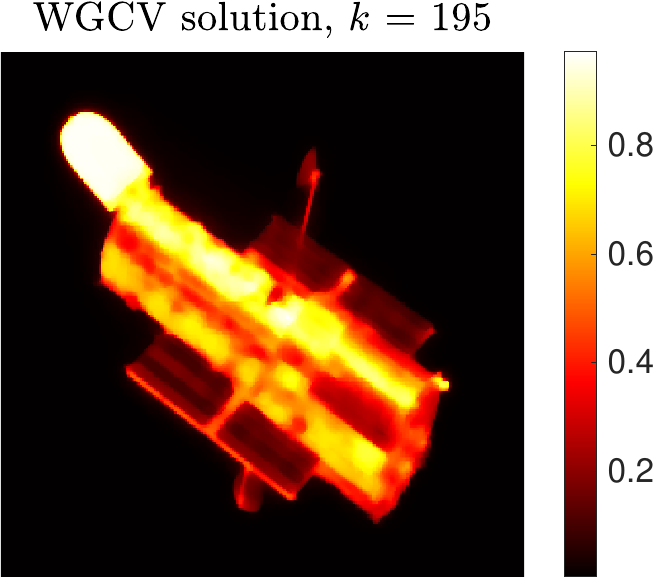}
		\end{minipage}
	}
	\caption{The relative error curves of regularized solutions computed by the pGKB based algorithms and corresponding reconstructed images for {\sf PRblurdefocus}.}
	\label{fig7}
\end{figure}

For {\sf PRblurdefocus}, we use the same procedure as for {\sf gauss1d} to construct $M$. Here the deblurring matrix $A$ is an object that only the matrix-vector products $Av$ or $A^Tv$ are available. Therefore, we can only compute the inner iteration by iteratively solving $Gs=A^Tu_i$. We set $\alpha=0.1$ and use \texttt{pcg.m} with stopping tolerance $\mathtt{tol}=10^{-6}$ to compute inner iterations. We compare the convergence behaviors of pGKB\_SPR and pGKB\_HR by plotting there convergence history curves. The convergence history curves and the corresponding reconstruct solutions are shown in \Cref{fig3}, and relative errors of the final regularized solutions and the corresponding iteration numbers are shown in \Cref{tab2}. From \Cref{fig:7a} we observe the typical semi-convergence behavior of pGKB\_SPR, and both DP and LC under-estimate the semi-convergence point $k_0$. For pGKB\_HR with SU, the relative error gradually decreases to a constant value for sufficient large $k$, and is is slightly higher than the relative error of pGKB\_SPR at $k_0$. For pGKB\_HR with WGCV, the relative error eventually stagnate at a value much higher than that of pGKB\_SPR at $k_0$, since WGCV significantly over-estimates the regularization parameter. The corresponding reconstructed images are shown in \Cref{fig:7b}, where we can clearly see that the reconstruction quality for WGCV is much poor than other algorithms. 

\begin{figure}
	\centering
	\subfloat[Convergence history]{\label{fig:8a}
		\begin{minipage}[b]{.45\linewidth}
			\includegraphics[width=2.5in]{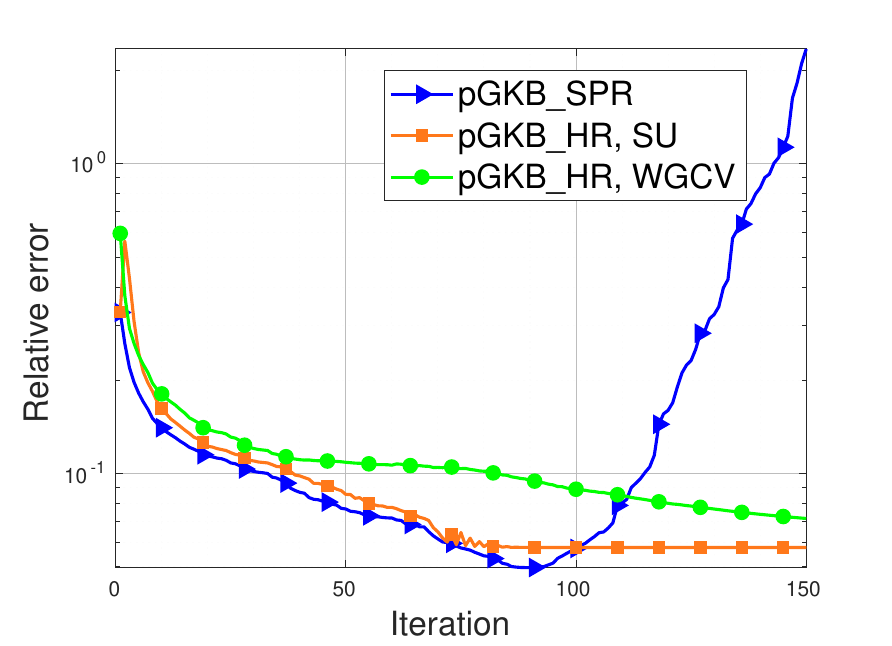}
		\end{minipage}
	} 
	%\quad
	\hspace{-6mm}
	\subfloat[Reconstructed solution]{\label{fig:8b}
		\begin{minipage}[b]{.55\linewidth}
			\centering
			\includegraphics[width=1.35in]{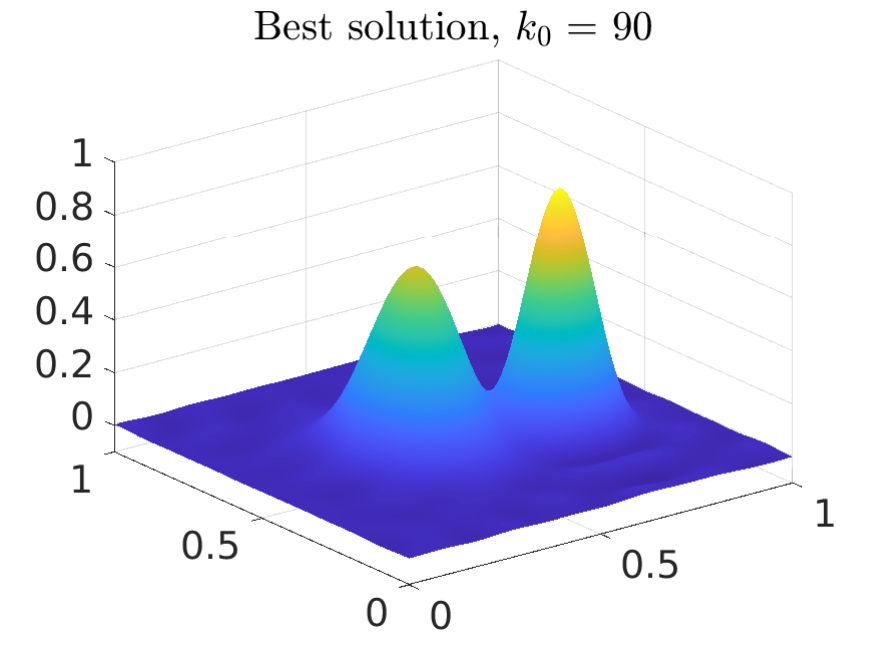}\hspace{-3mm}
			\includegraphics[width=1.35in]{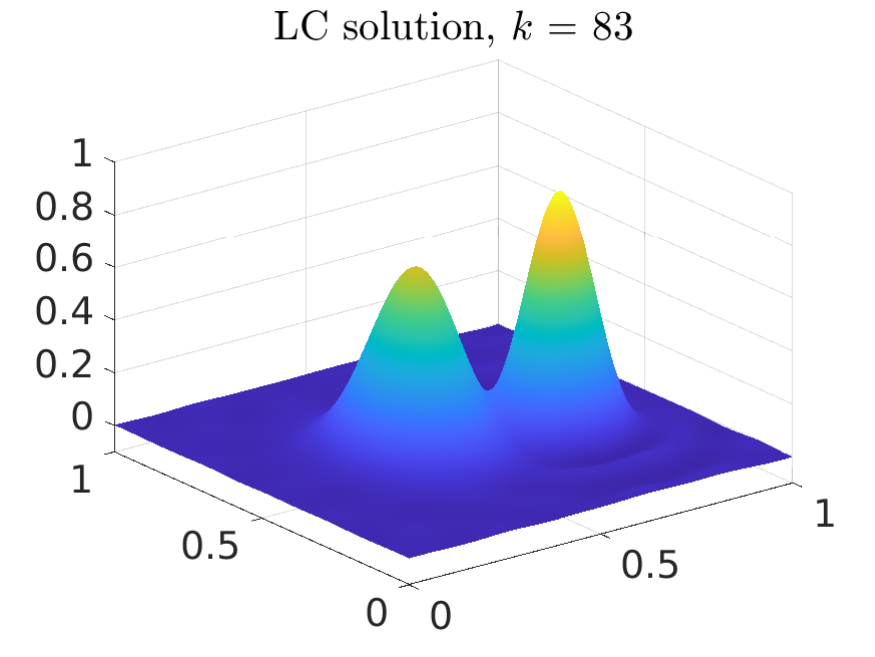}\vspace{-0.5mm}
			%\quad
			\includegraphics[width=1.35in]{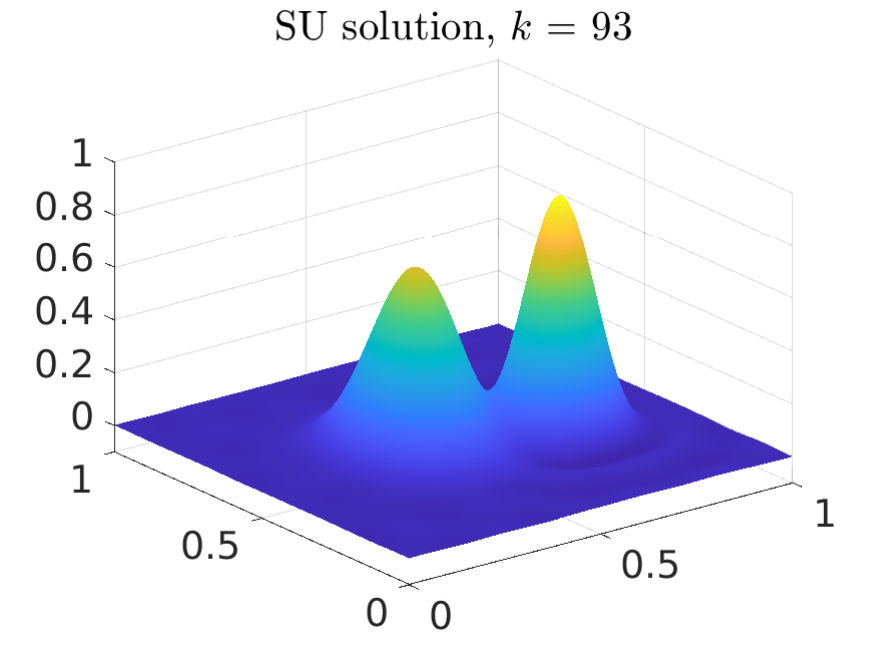}\hspace{-3mm}
			\includegraphics[width=1.35in]{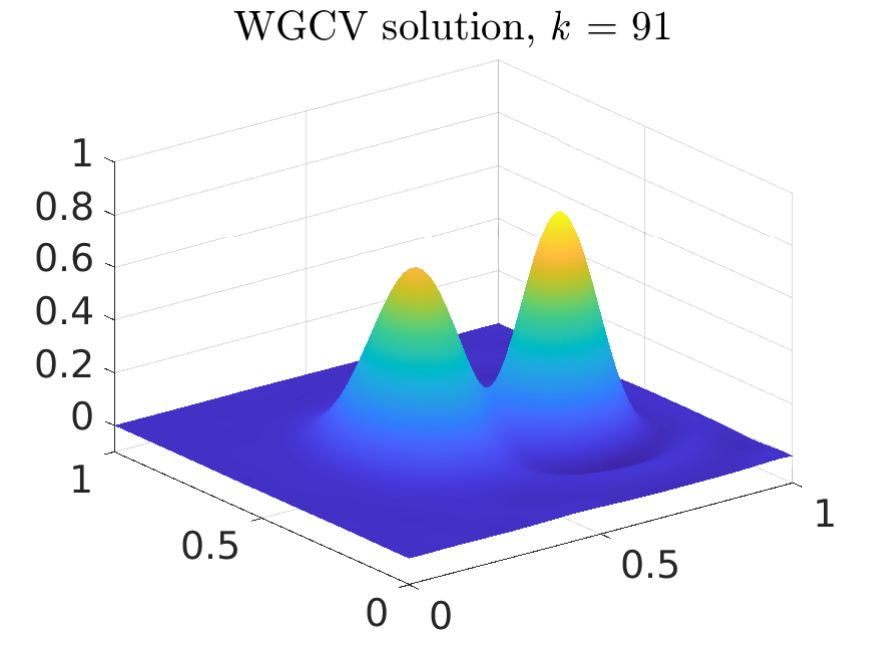}
		\end{minipage}
	}
	\caption{The relative error curves of regularized solutions computed by the pGKB based algorithms and corresponding reconstructed solutions for {\sf PRdiffusion}.}
	\label{fig8}
\end{figure}

\begin{table}[htp]
	\centering
	\caption{Relative errors of the final regularized solutions and corresponding early stopping iterations (in parentheses), where $\varepsilon=0.002$ for {\sf PRblurdefocus} and $\varepsilon=0.001$ for {\sf PRdiffusion}.}
	\scalebox{0.9}{
	\begin{tabular}{llllll}
		\toprule
		Problem     & Best  & DP & LC  & SU & WGCV          \\
		\midrule
		{\sf PRblurdefocus} & 0.0422 (141) & 0.0515 (77) & 0.0508 (79) & 0.0539 (101) & 0.1717 (195)  \\
		{\sf PRdiffusion} & 0.0497 (90) & 0.0602 (71) & 0.0526 (83) & 0.0578 (93) & 0.0946 (91)  \\
		\bottomrule
	\end{tabular}}
	\label{tab2}
\end{table}

For {\sf PRdiffusion}, we set $M$ as the discretized 2D negative Laplacian to enforce some smoothness on the desired initial $u_0$. Here the forward operator matrix $A$ is a functional handle that represents the computation process of numerical solution of $\partial_{t}u=\nabla^{2}u$. Therefore, we can only compute the inner iteration by solving $Gs=A^Tu_i$ using an iterative solver that is based on matrix-vector products. We set $\alpha=1$ in pGKB and use \texttt{pcg.m} with stopping tolerance $\mathtt{tol}=10^{-6}$ to compute inner iterations. From \Cref{fig8} and \Cref{tab2} we find that both DP and LC slighly under-estimate $k_0$ for pGKB\_SPR, and both the corresponding reconstructed solutions are of high quality. For pGKB\_HR, as $k$ becomes sufficient large, both WGCV and SU compute iterative solutions with relative errors decreasing toward a value slightly larger than the best for pGKB\_SPR, and the corresponding final regularized solutions approximate well to $x_{\text{true}}$.

%%-------------------------------------------------------------
\section{Conclusion and outlook}\label{sec7}
For linear inverse problems with general-form Tikhonov regularization term $x^TMx$, where $M$ is positive semi-definite, we have proposed several iterative regularization algorithms. These algorithms base upon a new iterative process called the preconditioned Golub-Kahan bidiagonalization (pGKB) that implicitly utilizes a proper preconditioner to generate solution subspaces incorporating prior properties of the desired solution. The pGKB based subspace projection regularization (pGKB\_SPR) algorithm is proposed, where the discrepancy principle or L-curve is used as an early stopping criterion. The regularization effect of pGKB\_SPR is analyzed by showing that the iterative solution has a filtered GSVD expansion form, thus revealing the semi-convergence behavior of it. To avoid semi-convergence of pGKB\_SPR, two pGKB based hybrid regularization (pGKB\_HR) algorithms are proposed that adopt WGCV and secant update for determining regularization parameters at each iteration, respectively. Both small-scale and large-scale linear inverse problems are used to test the proposed algorithms and illustrate excellent effectiveness and performance of them.

There are everal issues remaining to be further studied. For example, numerical results indicate that the accuracy of inner iteration in pGKB may be moderately relaxed without compromising the accuracy of final regularized solution, raising the need for theoretical analysis about the required accuracy of inner iteration. Another issue concerns the secant update method for pGKB\_HR, which numerically exhibits good convergence for iterative solution. Thus it is neccessary to analyze the convergence behaviors of regularization parameter $\mu_k$ and iterative solution.

%%------------------------------------------------------------
\section*{Acknowledgments}
The author thank Dr. Long Wang and Prof. Weile Jia for their consistent encouragement and support during the research.

\bibliographystyle{siamplain}
\bibliography{references}

\end{document}